\def\1{\mathbbm{1}}
\theoremstyle{definition} 
\theoremstyle{plain} 
\newtheorem{theorem}{Theorem}
\newtheorem{lemma}{Lemma}
\newtheorem{corollary}{Corollary}
\newtheorem{claim}{Claim}
\newtheorem{observation}{Observation}
\newtheorem{proposition}{Proposition}
\newtheorem{que}{Question}
\newtheorem{remark}{Remark}
\def \0{{\bf 0}}
\title
{\bf On the Wiener index, distance cospectrality and transmission-regular graphs}
\author{Aida Abiad\thanks{Department of Quantitative Economics, Maastricht University, Maastricht, The Netherlands (A.AbiadMonge@maastrichtuniversity.nl).}\and Boris Brimkov\thanks{Department of Computational and Applied Mathematics, Rice University, USA (boris.brimkov@rice.edu).}\and Aysel Erey\thanks{Department of Mathematics,
University of Denver, USA (aysel.erey@du.edu).}\and Lorinda Leshock\thanks{Department of Mathematical Sciences, University of Delaware, USA (lleshock@udel.edu).}\and
Xavier Mart\'{i}nez-Rivera\thanks{Department of Mathematics, Iowa State University, USA (xaviermr@iastate.edu).}\and  Suil O\thanks{ SRC-Applied Algebra and Optimization Research Center, Sungkyunkwan University, Korea (suilo@skku.edu).}\and  Sung-Yell Song\thanks{Department of Mathematics, Iowa State University, USA (sysong@iastate.edu).}\and Jason Williford\thanks{Department of Mathematics, University of Wyoming, USA (jwillif1@uwyo.edu).}}
\date{}
\begin{document}
\maketitle

\begin{abstract}
\noindent
In this paper, we investigate various algebraic and graph theoretic properties of the distance matrix of a graph. Two graphs are $D$-cospectral if their distance matrices have the same spectrum. We construct infinite pairs of $D$-cospectral graphs with different diameter and different Wiener index. A graph is $k$-transmission-regular if its distance matrix has constant row sum equal to $k$. We establish tight upper and lower bounds for the row sum of a $k$-transmission-regular graph in terms of the number of vertices of the graph. Finally, we determine the Wiener index and its complexity for linear $k$-trees, and obtain a closed form for the Wiener index of block-clique graphs in terms of the Laplacian eigenvalues of the graph. The latter leads to a generalization of a result for trees which was proved independently by Mohar and Merris.
\\[7pt]

\noindent {\textbf{Keywords:} distance matrix, distance cospectral graphs, diameter, Wiener index, Laplacian matrix, transmission-regular.\\}
\noindent {\textbf{MSC Codes:} 05C50, 05C12, 94C15}
\end{abstract}

\section{Introduction and Preliminaries}
Every graph $G$ in this paper is undirected, simple and loopless and has $n$ vertices.
The \emph{distance matrix} $D$ of a connected graph $G$ is the (symmetric) matrix indexed by the vertices of $G$ and with its $(i,j)$-entry $d_{ij}$ equal to the distance  between the vertices $v_i$ and $v_j$, i.e., the length of a shortest path between $v_i$ and $v_j$.
After its application in 1971 by Graham and Pollack \cite{GP1971} as a tool to study a data communication problem, the distance matrix of a connected graph eventually became a topic of interest when researchers tried to compute its characteristic polynomial.
That naturally led to the study of its eigenvalues
(or its spectrum); for a survey on distance spectra and recent results on this topic, see \cite{AH14, D2012, KST2016, SI2011}.

Studying the eigenvalues of a matrix associated with a graph is the subject of spectral graph theory, where the main objective is determining what characteristics of the graph are reflected in the spectrum of the matrix under consideration. One way to do this is to study the relationships between \emph{cospectral} graphs, that is, graphs whose associated matrices share a common spectrum; such pairs of graphs help us understand how limited the information that can be extracted from the spectrum is. Although the matrix of interest here is the distance matrix, for the sake of clarity, we will call two cospectral graphs \emph{$D$-cospectral} when their distance matrices have the same spectrum.
Moreover, the set of eigenvalues of the distance matrix of a connected graph $G$, denoted by $\lambda_1\geq \cdots \geq \lambda_n$, is called the \emph{distance spectrum} of $G$ (abbreviated \emph{$D$-spectrum}).
In 1977, McKay \cite{MK1977} showed that hardly any trees can be identified by their $D$-spectrum. Recently, it was established that $D$-cospectral graphs may have a differing number of edges, and a method to construct such graphs was presented in \cite{K2016}. It turns out that for the distance matrix, things can get rather complicated, and there are many open problems regarding what properties follow from the distance spectrum. One open question has been to determine whether or not two $D$-cospectral graphs always have the same diameter (that is, the largest distance between any two vertices of the graph). This question is answered in Section \ref{sec:Dcospectral}.

The \emph{Wiener index} $W$ is a topological index used in theoretical chemistry as a structural descriptor for organic molecules. For a connected graph $G$ with distance matrix $D=(d_{ij})$, the Wiener index $W$ is defined as follows:
$$W(G)=\frac{1}{2}\displaystyle \sum_{i=1}^{n}\sum_{j=1}^{n}d_{ij},$$
or equivalently, $W$ is the sum of distances between all pairs of vertices of $G$. For a survey on the Wiener index, see \cite{Survey-Wiener2001}.
This 70-year-old parameter remains relevant today, which is evidenced by the strong attention it continues to receive
(see the recent papers
\cite{W-Ref-1, W-Ref-2, W-Ref-3, W-Ref-4, W-Ref-5, W-Ref-6, W-Ref-7, W-Ref-8, W-Ref-9}).
As stated in \cite{Survey-Wiener2001}, the Wiener index of a graph also has applications in communication, facility location, and cryptography, among others.  From a spectral point of view, the Wiener index of a graph has also been connected to the distance spectral radius of the graph in \cite{I2009}.

Let $i,j,k$ be nonnegative integers; a graph $G$ is \emph{distance-regular} if for any choice of $u,v \in V(G)$ with $d(u,v) = k$, the number of vertices $w \in V(G)$ such that $d(u,w)=i$ and $d(v,w)=j$ is independent of the choice of $u$ and $v$. It follows from \cite[Theorem 4]{W and dist reg} that if two distance-regular graphs are $D$-cospectral, then they must have the same Wiener index. Hence, a natural question arises: do $D$-cospectral graphs have the same Wiener index? This question is answered in Section \ref{sec:Dcospectral}.

We say that a graph is \emph{$k$-transmission-regular} (or \emph{transmission-regular}) if its distance matrix has constant row sum equal to $k$. Naturally, just as regular graphs, transmission-regular graphs are also of interest in spectral graph theory.
Transmission-regular graphs were introduced by Handa \cite{Handa1999} in 1999, and in 2009, Balakrishnan et al. \cite{Balakrishnan2009} showed that this class of graphs is the same as the class of (connected) \emph{distance-balanced} graphs \cite{dist-balanced}.
These graphs have applications in chemistry, as they were used in \cite{dist-balanced} to construct an infinite family of graphs that maximize the Szeged index, a graph invariant in chemical graph theory.
It is well-known that if a connected graph on $n \geq 3$ vertices is $k$-regular, then $2 \leq k \leq  n-1$, where both bounds are sharp.
In Section \ref{sec:transmissionregular}, we establish an analogous result for $k$-transmission-regular graphs, where it is shown that
$n-1 \leq k \leq \lfloor{\frac{n^2}{4}}\rfloor$, with both bounds being sharp.

The \emph{Laplacian matrix} of a graph $G$ is defined as $L=\Delta -A$, where $A$ is the adjacency matrix of $G$ and $\Delta$ is the degree matrix. The Laplacian eigenvalues of $G$ are denoted by $\mu_1\geq \mu_2\geq \cdots \geq \mu_n=0$. Mohar \cite{M1991} and Merris \cite{Me1989,Me1990} independently proved that there exists a relation between the Wiener index of a tree and its Laplacian eigenvalues. As stated in \cite{Survey-Wiener2001}, this result ignited hope in seeing linear algebra become a tool in the arsenal for studying the theory of the Wiener index.
As a contribution in this direction, in Section \ref{sec:Wtreelikegraphs} we generalize the result of Mohar and Merris for
\emph{block-clique graphs}, that is, the connected graphs in which every block is a clique (our result considers the case where the cliques are of the same size).

The paper concludes by addressing the computational aspect of the Wiener index at the end of Section \ref{sec:Wtreelikegraphs}, where an algorithm that calculates this index for a family of tree-like graphs is presented.

The topics considered in this paper -- the Wiener index, the $D$-spectrum of a graph, and transmission-regular graphs -- are all closely related. Transmission-regular graphs are used in \cite{AP15}
to construct graphs with a particular number of distinct $D$-eigenvalues.
In \cite{AP15},  transmission-regular graphs are also used to
construct graphs with few distinct $D$-eigenvalues and arbitrary diameter.
Given a graph $G$ on $n$ vertices with $D$-eigenvalues
$\lambda_1, \lambda_2, \dots, \lambda_n$,
the {\em distance Estrada index} of $G$ is $\sum_{j=1}^{n}e^{\lambda_j}$.
Upper and lower bounds for the distance Estrada index of a graph are
given in \cite{S16} in terms of its Wiener index and its diameter.
A strict lower bound for the distance spectral radius of a tree in terms of
the Wiener index was established in \cite[Theorem 4]{Z07};
the same lower bound was later shown to
hold for any graph in \cite[Corollary 7]{ZT07},
with equality holding if and only if the graph is transmission-regular.

\section{$D$-cospectral graphs, diameter and Wiener index}\label{sec:Dcospectral}
In this section we investigate algebraic and graph theoretic properties of the $D$-spectrum
of a graph and the Wiener index and diameter of the graph. In particular, we settle the following two open questions:

\begin{que}\label{diameter}
\emph{Do $D$-cospectral graphs have the same diameter?}
\end{que}

\begin{que}\label{Wiener}
\emph{Do $D$-cospectral graphs have the same Wiener index?}
\end{que}

\noindent It is easy to see that the converse to Question \ref{Wiener} is false, Figure \ref{fig:sameWienerNotCospectral} shows the smallest counterexample on four vertices.
\begin{figure}[h!]
\centering
  \includegraphics[scale=0.3]{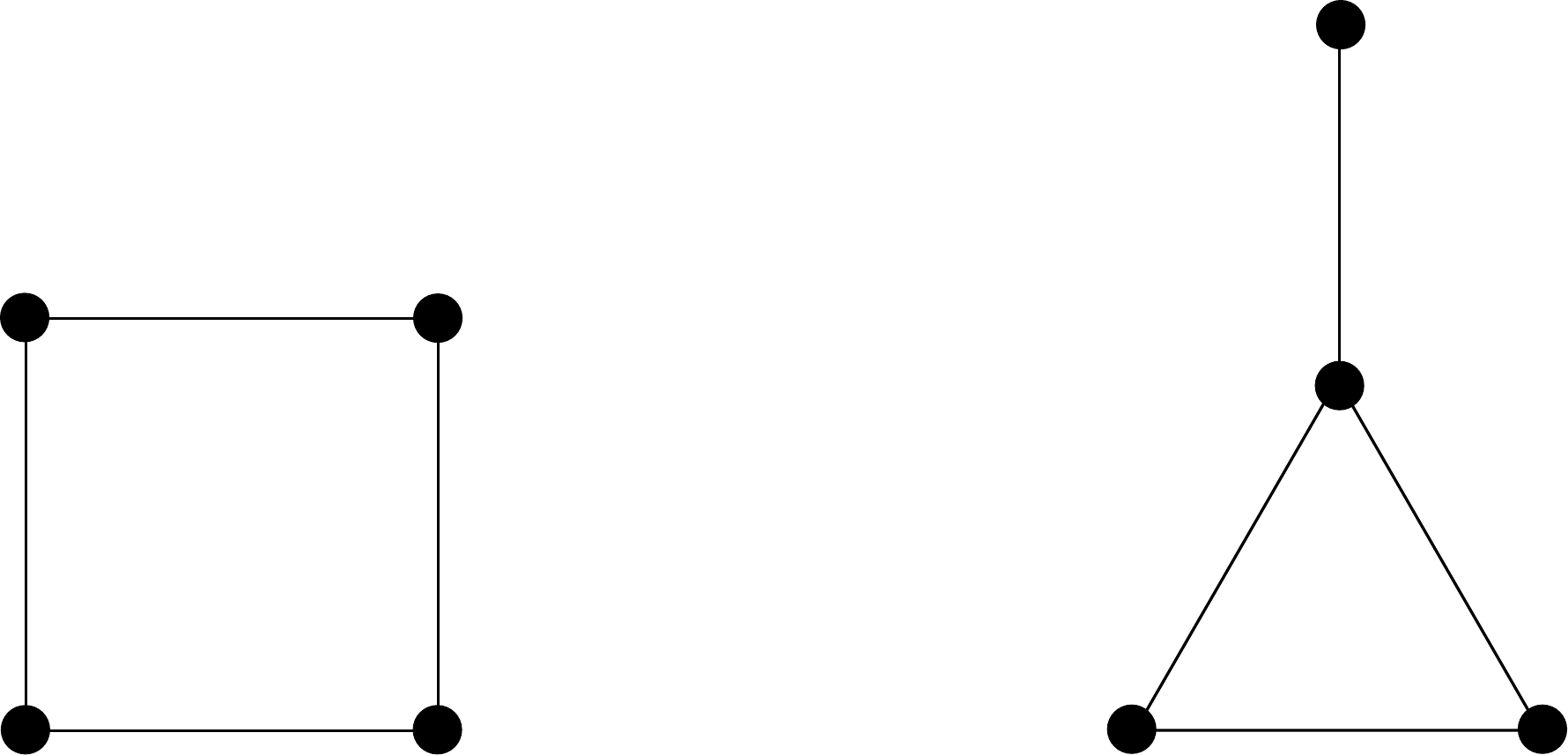}
  \caption{Two graphs on four vertices which are not $D$-cospectral but have the same Wiener index $W=8$.}
  \label{fig:sameWienerNotCospectral}
\end{figure}

Moreover, SAGE simulations on graphs with up to ten vertices confirm that the pair of graphs shown in Figure \ref{fig:DCospectralDIFERENTdiameterANDWiener} is the smallest and unique pair of $D$-cospectral graphs having different diameter and Wiener index.

\begin{center}
\begin{figure}[h!]
\centering
  \includegraphics[scale=0.25]{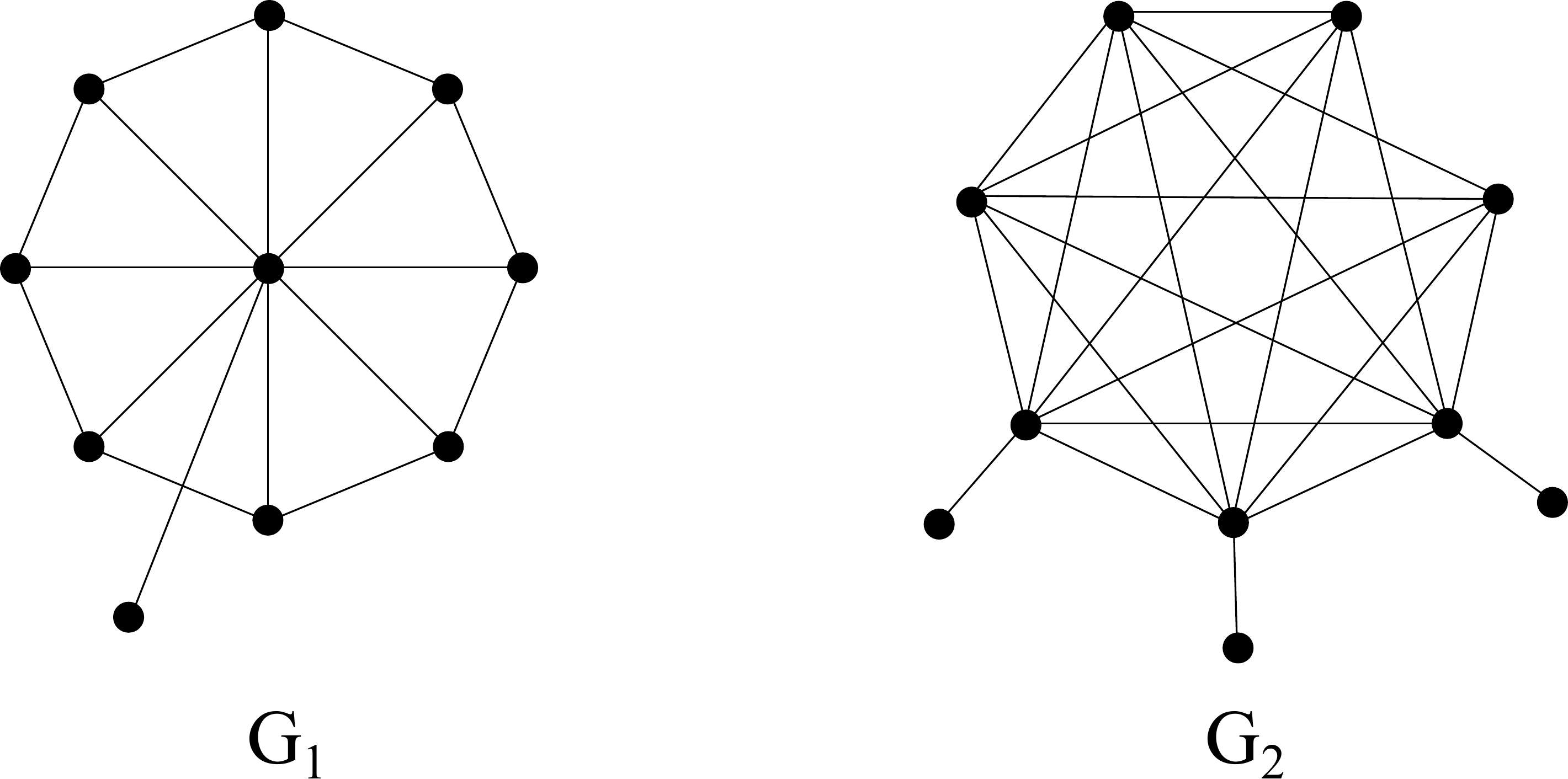}
    \caption{Unique pair of $D$-cospectral graphs with ten vertices or less having different diameter and different Wiener index.}
  \label{fig:DCospectralDIFERENTdiameterANDWiener}
\end{figure}
\end{center}


Let $G$ be a graph with vertex set $V$, adjacency matrix $A$ and distance matrix $D$, and let $q$ be a positive integer.  We define the $q$-\textit{coclique extension} of $G$ to be the graph $G_q$ with vertex set $V \times \{1,\dots,q\}$, where $(x,i)$ is adjacent to $(y,j)$ if and only if $x$ is adjacent to $y$ in $G$.  Similarly we define the $q$-\textit{clique extension} of $G$ to be the graph $G_q^+$ with vertex set $V \times \{1,\dots,q\}$ with $(x,i)$ adjacent to $(y,j)$ if and only if $x$ is adjacent to $y$ in $G$ or $x = y$ and $i \neq j$. See Figure \ref{fig:qcliqueextension} for an illustration. The adjacency matrices of the graphs $G_q$ and $G_q^+$ are easily seen to be
$\begin{pmatrix} J_q \otimes A  \end{pmatrix}$ and $\begin{pmatrix}  (J_q \otimes (A+I)) - I \end{pmatrix}$, respectively, where $J_q$ is the $q$ by $q$ matrix of all 1's, $I$ is an identity matrix of the appropriate size, and $\otimes$ denotes the \emph{Kronecker} product. The $q$-(co)clique extension of $G$ is also known as blow up of $G$ to $q$-(co)cliques. The following lemma, which is also a corollary of (the proof of) Theorem 3.3. in \cite{I2009coclique}, gives an expression for the distance matrices of $G_q$ and $G_q^+$. Note that $q$-(co)clique extensions of $G$ may be seen also as the lexicographic products of $G$ with the empty graph $N_q$ and the complete graph $K_q$, respectively. However, we keep the following result and its proof for the sake of simplicity.

\begin{lemma}\label{dmat}

Let $G$ be a connected graph with at least two vertices.  Then the distance matrix of $G_q$ is $\begin{pmatrix} J_q \otimes D + (J-I)_q \otimes 2I   \end{pmatrix}$, and the distance matrix of
$G_q^+$ is $\begin{pmatrix} J_q \otimes D + (J-I)_q \otimes I   \end{pmatrix}$.

\end{lemma}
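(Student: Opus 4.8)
The plan is to compute distances in $G_q$ and $G_q^+$ directly from the definition, distinguishing two cases according to whether two vertices share the same first coordinate. First consider $G_q$. Take two vertices $(x,i)$ and $(y,j)$. If $x \neq y$, then since adjacency in $G_q$ is governed entirely by adjacency of the first coordinates in $G$, any walk $x = x_0, x_1, \dots, x_k = y$ in $G$ lifts (by choosing the second coordinates arbitrarily) to a walk of the same length in $G_q$ from $(x,i)$ to $(y,j)$, and conversely projecting any walk in $G_q$ onto the first coordinate gives a walk in $G$ of the same length. Hence $d_{G_q}((x,i),(y,j)) = d_G(x,y)$ whenever $x \neq y$ (note this does not depend on $i,j$). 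If $x = y$ but $i \neq j$, the two vertices are nonadjacent in $G_q$ (the definition requires $x$ adjacent to $y$), but since $G$ has at least two vertices and is connected, $x$ has a neighbor $z$ in $G$, so $(x,i), (z,1), (x,j)$ is a walk of length $2$; it cannot have length $1$, so the distance is exactly $2$. Translating these two cases into matrix form: the block indexed by $(i,j)$ of the distance matrix of $G_q$ is $D$ when $i \neq j$ with the diagonal entries of that copy of $D$ (namely the $x=y$ positions) replaced by $2$, and it is $D$ itself when $i = j$ (the diagonal is $0$). This is exactly $J_q \otimes D + (J-I)_q \otimes 2I$: the term $J_q \otimes D$ contributes $D$ in every block, and $(J-I)_q \otimes 2I$ adds $2I$ in every off-diagonal block, correcting the would-be $0$ on the diagonal of those blocks to $2$.

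The argument for $G_q^+$ is identical except for the case $x = y$, $i \neq j$: now $(x,i)$ and $(y,j)$ are adjacent by definition, so their distance is $1$ rather than $2$. The case $x \neq y$ is unchanged because inserting the extra "clique" edges within a fibre cannot create a shortcut: any walk using such an edge stays at the same first coordinate for that step, so projecting to $G$ still yields a walk of the same length, giving $d_{G_q^+}((x,i),(y,j)) = d_G(x,y)$ as before. Hence the $(i,j)$ block of the distance matrix is $D$ for $i=j$ and $D$ with its diagonal $0$'s replaced by $1$'s for $i \neq j$, which is $J_q \otimes D + (J-I)_q \otimes I$.

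I do not expect any serious obstacle here; the lemma is essentially a bookkeeping exercise. The one point that needs a little care — and the only place the hypothesis that $G$ is connected with at least two vertices is used — is verifying that two distinct vertices in the same fibre of $G_q$ are at distance exactly $2$ (not larger, and not $1$): this requires that $x$ have at least one neighbor in $G$, which follows from connectedness and $|V(G)| \geq 2$. Everything else is the observation that distances along the first coordinate are unaffected by the choice of second coordinates and by the presence or absence of intra-fibre edges, together with a direct reading of the Kronecker-product expressions.
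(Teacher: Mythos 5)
Your proof is correct and follows essentially the same route as the paper's: lift walks from $G$ to $G_q$ and project walks back to show $d_{G_q}((x,i),(y,j)) = d_G(x,y)$ for $x \neq y$, then use connectedness and $|V(G)| \geq 2$ to produce a common neighbor giving distance exactly $2$ (resp.\ $1$) within a fibre. The only cosmetic imprecision is in the $G_q^+$ case, where projecting a walk that uses an intra-fibre edge yields a sequence with a stationary step rather than a walk of literally the same length; deleting such steps gives a walk of length at most $l$, which is all the inequality $d_G(x,y) \leq l$ requires.
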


\begin{proof}

Let $(x,a)$ and $(y,b)$ be two vertices of $G_q$ with $x \neq y$, where $d(x,y) = k$ in $G$.
Let $x,x_1,x_2,\dots,x_k = y$ be a geodesic path of length $k$ in $G$.
Then  $(x,a),(x_1,b),(x_2,b),\dots,(x_k,b) = (y,b)$ is a path in $G_q$, implying that \break \hfill
$d((x,a),(y,b)) \leq k = d(x,y)$.

Now suppose that $d((x,a),(y,b)) = l$ in $G_q$, and let \break \hfill $(x,a),(x_1,a_1),(x_2,a_2),\dots,(x_l,a_l) = (y,b)$ be a geodesic path in $G_q$.  Then $x,x_1,x_2,\dots,x_l$ is a walk of length $l$ from $x$ to $y$ in $G$.  Therefore,  $d(x,y) \leq l = d((x,a),(y,b))$, so
$d((x,a),(y,b)) = d(x,y)$.

If $x = y$ and $a \neq b$, than $(x,a)$ and $(x,b)$ are not adjacent in $G_q$, so $d((x,a),(x,b)) > 1$.  Since $G$ is connected with at least two vertices, $x$ has some neighbor $z$ in $G$.  Then both $(x,a)$ and $(x,b)$ are adjacent to $(z,a)$, so $d((x,a),(x,b)) = 2$.

The proof for $G_q^+$ is similar, except $d((x,a),(x,b)) = 1$ since they are adjacent.

\end{proof}

\begin{figure}[h!]\label{fig:qcliqueextension}
\begin{center}
\includegraphics[scale=0.25]{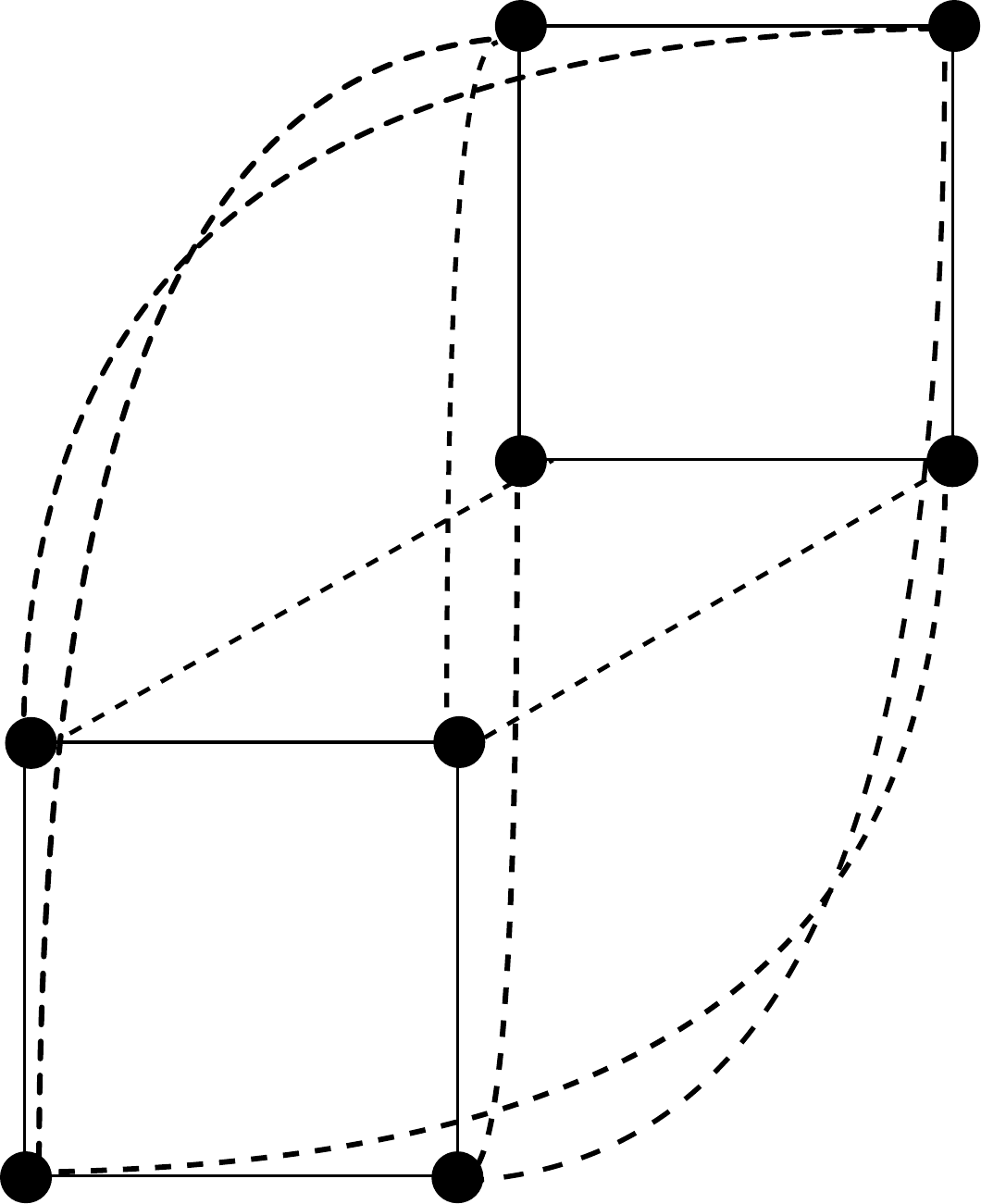}
\end{center}
\caption {The $2$-clique extension of $C_4$.}
\end{figure}

\begin{theorem}\label{cosp}

Let $G,H$ be two $D$-cospectral graphs.  Then $G_q$ and $H_q$ are $D$-cospectral, and
$G_q^+$ and $H_q^+$ are $D$-cospectral.

\end{theorem}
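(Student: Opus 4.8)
The plan is to compute the $D$-spectrum of $G_q$ (and of $G_q^+$) explicitly in terms of the $D$-spectrum of $G$, using the Kronecker-product formula for the distance matrix supplied by Lemma \ref{dmat}; since the resulting expression will depend on $G$ only through $\mathrm{spec}(D_G)$ and the number of vertices $n$, the conclusion is immediate.

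First I would set $D=D_G$, let $n=|V(G)|$, and write $M = J_q\otimes D + (J_q-I_q)\otimes 2I_n$ for the distance matrix of $G_q$ guaranteed by Lemma \ref{dmat}. Next I would diagonalize the ``$J_q$ part'': fix an orthonormal basis $\{u_1,\dots,u_q\}$ of $\mathbb{R}^q$ with $u_1=\tfrac{1}{\sqrt q}\mathbf{1}$, so that $J_q u_1 = q u_1$ and $J_q u_\ell = 0$ for $\ell\geq 2$; consequently $(J_q-I_q)u_1=(q-1)u_1$ and $(J_q-I_q)u_\ell=-u_\ell$ for $\ell\geq 2$. Then, using the mixed-product identity $(A\otimes B)(C\otimes E)=AC\otimes BE$, for any eigenvector $v$ of $D$ with $Dv=\lambda v$ one computes directly that $M(u_1\otimes v)=\bigl(q\lambda+2(q-1)\bigr)(u_1\otimes v)$ and $M(u_\ell\otimes v)=-2(u_\ell\otimes v)$ for $\ell\geq 2$. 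As $v$ runs over an orthonormal eigenbasis of $D$ and $\ell$ over $1,\dots,q$, the $nq$ vectors $u_\ell\otimes v$ form an orthonormal eigenbasis of $\mathbb{R}^{nq}$, so
$$\mathrm{spec}(M)=\{\,q\lambda_i+2(q-1): 1\le i\le n\,\}\ \cup\ \{\,-2 \text{ with multiplicity } n(q-1)\,\}.$$
This multiset is determined by $n$ together with $\mathrm{spec}(D)=\{\lambda_1,\dots,\lambda_n\}$ alone. Hence if $G$ and $H$ are $D$-cospectral (which in particular forces $|V(G)|=|V(H)|$), then $M_G$ and $M_H$ have the same spectrum; that is, $G_q$ and $H_q$ are $D$-cospectral.

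The argument for $G_q^+$ is identical with $2I_n$ replaced by $I_n$ throughout: by Lemma \ref{dmat} its distance matrix is $J_q\otimes D + (J_q-I_q)\otimes I_n$, and the same eigenvector computation gives $\mathrm{spec}=\{q\lambda_i+(q-1):1\le i\le n\}\cup\{-1 \text{ with multiplicity } n(q-1)\}$, again a function of $n$ and $\mathrm{spec}(D)$ only, so $G_q^+$ and $H_q^+$ are $D$-cospectral. There is no substantial obstacle here; the only points that need a little care are verifying the two eigenvector computations via the Kronecker mixed-product identity, and recording the (standard) observation that $D$-cospectrality includes equality of the number of vertices, which is what guarantees that the ``extra'' eigenvalue $-2$ (respectively $-1$) occurs with the same multiplicity $n(q-1)$ for $G$ and for $H$.
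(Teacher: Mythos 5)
Your proof is correct. It takes a slightly different, more computational route than the paper: the paper observes that adding the scalar matrix $I_q\otimes 2I$ turns the distance matrix of $G_q$ into the pure Kronecker product $J_q\otimes(D_1+2I)$, and then argues that cospectrality is preserved under the shift and under tensoring with $J_q$; you instead diagonalize directly, tensoring an eigenbasis of $J_q$ (the vector $\tfrac{1}{\sqrt q}\mathbf{1}$ plus its orthogonal complement in the kernel) with an orthonormal eigenbasis of $D$. Both arguments rest on exactly the same two ingredients --- Lemma \ref{dmat} and the rank-one spectral structure of $J_q$ --- so they are close cousins, but yours has the advantage of producing the explicit $D$-spectrum of $G_q$, namely $\{q\lambda_i+2(q-1)\}\cup\{-2^{(n(q-1))}\}$, as a byproduct (and similarly for $G_q^+$), which the paper's shift argument leaves implicit; the paper's version is shorter and avoids any eigenvector computation. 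Your closing remark that cospectrality forces $|V(G)|=|V(H)|$, so that the multiplicity $n(q-1)$ of the extra eigenvalue matches, is a detail the paper glosses over but that your explicit formulation genuinely needs, and you handle it correctly.
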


\begin{proof}

Let $D_1$ and $D_2$ be the distance matrices of $G$ and $H$, respectively.
Then the distance matrices of $G_q$ and $H_q$ are $E_1 = J_q \otimes D_1 + (J-I)_q \otimes 2I$ and $E_2 = J_q \otimes D_2 + (J-I)_q \otimes 2I$, respectively.  Noting that $I_q \otimes 2I$ is a scalar multiple of the identity matrix, we have that $E_1$ and $E_2$ are cospectral if and only if $E_1+I_q \otimes 2I$ is cospectral with $E_2 + I_q \otimes 2I$.

We have $E_1+I_q \otimes 2I = J_q \otimes D_1 + (J-I)_q \otimes 2I +I_q \otimes 2I =  J_q \otimes D_1 + J_q \otimes 2I  = J_q \otimes (D_1+2I)$.  Similarly, $E_2+I_q \otimes 2I =
J_q \otimes (D_2+2I)$.  These matrices are cospectral if and only if $D_1+2I$ and $D_2+2I$ are cospectral, which in turn are cospectral if and only if $D_1$ and $D_2$ are cospectral.

The proof for $G_q^+$ and $H_q^+$ is similar.

\end{proof}

We can now prove the following theorem, which answers Questions \ref{Wiener} and \ref{diameter}.

\begin{theorem}

There are infinitely many pairs of $D$-cospectral graphs that have different diameters and Wiener indices.

\end{theorem}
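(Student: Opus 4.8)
The plan is to bootstrap a whole family from a single small example by means of the (co)clique extension. Take the pair $\{G,H\}$ exhibited in Figure~\ref{fig:DCospectralDIFERENTdiameterANDWiener}: by the computer search described above, $G$ and $H$ are $D$-cospectral graphs on $n=10$ vertices with $\operatorname{diam}(G)\neq\operatorname{diam}(H)$ and $W(G)\neq W(H)$. Note that both of these diameters are at least $2$: a graph of diameter $1$ is complete, and complete graphs are determined by their $D$-spectrum (if $D$ is $D$-cospectral to $J-I$, then $D+I$ is a positive semidefinite rank-one matrix with all-ones diagonal, forcing $D=J-I$), so neither $G$ nor $H$ can be complete.

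First I would record how the $q$-clique extension affects diameter and Wiener index, using Lemma~\ref{dmat}. For a connected graph $G$ on $n$ vertices with distance matrix $D=(d_{ij})$, the distance matrix of $G_q^+$ is $J_q\otimes D+(J-I)_q\otimes I$. Its off-diagonal entries are either $1$ (two copies of the same vertex) or $d_{ij}$ with $i\neq j$ (copies of distinct vertices), so its largest entry is $\max\{\operatorname{diam}(G),1\}=\operatorname{diam}(G)$, i.e. $\operatorname{diam}(G_q^+)=\operatorname{diam}(G)$. Summing all entries using $\mathbf 1_{qn}=\mathbf 1_q\otimes\mathbf 1_n$ gives
\[
\sum_{i,j}\big(J_q\otimes D+(J-I)_q\otimes I\big)_{ij}
= q^2\sum_{i,j}d_{ij}+(q^2-q)n
= 2q^2 W(G)+(q^2-q)n ,
\]
hence $W(G_q^+)=q^2 W(G)+\binom{q}{2}n$. (The identical computation for the coclique extension, using the first formula of Lemma~\ref{dmat}, yields $\operatorname{diam}(G_q)=\max\{\operatorname{diam}(G),2\}$ and $W(G_q)=q^2 W(G)+(q^2-q)n$; either extension works for us, since our base graphs have diameter at least $2$.)

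Next I would apply Theorem~\ref{cosp} to the base pair: for every positive integer $q$, the graphs $G_q^+$ and $H_q^+$ are $D$-cospectral. By the previous paragraph, $\operatorname{diam}(G_q^+)=\operatorname{diam}(G)\neq\operatorname{diam}(H)=\operatorname{diam}(H_q^+)$, and, since $G$ and $H$ have the same number of vertices, $W(G_q^+)-W(H_q^+)=q^2\big(W(G)-W(H)\big)\neq 0$. Because $G_q^+$ and $H_q^+$ each have $10q$ vertices, the pairs $\{G_q^+,H_q^+\}$ for $q=1,2,3,\dots$ are pairwise distinct, which produces infinitely many pairs of $D$-cospectral graphs with different diameters and different Wiener indices.

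I expect no genuine obstacle here beyond assembling the pieces already in place: the only points requiring care are the Kronecker-product bookkeeping in computing the distance matrices and Wiener indices, and the verification that the extension preserves the diameter (which is where non-completeness of the base graphs is used). The substantive input is external to this argument — namely the existence of the base pair in Figure~\ref{fig:DCospectralDIFERENTdiameterANDWiener}; were that not available, the real work would be to locate even a single pair of $D$-cospectral graphs of different diameter.
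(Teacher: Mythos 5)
Your proof is correct and is essentially the paper's own argument: take the ten-vertex $D$-cospectral pair of Figure~\ref{fig:DCospectralDIFERENTdiameterANDWiener}, apply Theorem~\ref{cosp} to get cospectrality of the extensions, and use Lemma~\ref{dmat} to track diameter and Wiener index. The only (immaterial) difference is that you use the $q$-clique extension as the primary construction while the paper uses the $q$-coclique extension, explicitly noting the clique extension as an alternative family.
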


\begin{proof}

Let $G_1$ and $G_2$ be the two $D$-cospectral graphs in Figure \ref{fig:DCospectralDIFERENTdiameterANDWiener}.
The diameters of $G_1$ and $G_2$ are 2 and  3, and the Wiener indices are 71 and 73, respectively.

For each $q$, $(G_1)_q$ and $(G_2)_q$ are $D$-cospectral by Theorem \ref{cosp}. Using Lemma \ref{dmat}, we find their diameters are 2 and 3 respectively and the Wiener indices are $71q^2+10q$ and $73q^2+10q$, respectively.
\end{proof}

We note that the graphs $(G_1)_q^+$ and $(G_2)_q^+$ give another infinite family of examples.

\begin{remark}
Given a square matrix $M$, adding a scalar multiple of the identity matrix to $M$ preserves the eigenvectors and shifts the eigenvalues of $M$.
\end{remark}

The next result shows that $D$-cospectral graphs have the same Wiener index under some sufficient condition.

\begin{proposition}\label{propo:profiles}
Let $G_1, G_2$ be two $k$-transmission-regular graphs. If  $G_1, G_2$ are $D$-cospectral graphs, then $G_1, G_2$ have the same Wiener index.
\end{proposition}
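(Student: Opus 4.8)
The plan is to show that, for a $k$-transmission-regular graph, the Wiener index is completely determined by the number of vertices and the common row sum $k$, and that both of these quantities are read off from the $D$-spectrum. So the proposition will reduce to a one-line computation.

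First I would recall that if $G$ is $k$-transmission-regular on $n$ vertices with distance matrix $D$, then by definition $D\mathbf{1}=k\mathbf{1}$, where $\mathbf{1}$ denotes the all-ones vector. Hence
\[
2W(G)=\sum_{i=1}^{n}\sum_{j=1}^{n}d_{ij}=\mathbf{1}^{\top}D\mathbf{1}=k\,\mathbf{1}^{\top}\mathbf{1}=kn,
\]
so that $W(G)=\tfrac{kn}{2}$.

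Next, since $G_1$ and $G_2$ are $D$-cospectral, their distance matrices share the same multiset of eigenvalues and in particular have the same order, so $G_1$ and $G_2$ have the same number of vertices $n$. By hypothesis both graphs are $k$-transmission-regular for the same $k$; one could alternatively observe that $k$ is spectrally determined anyway, because $\mathbf{1}$ is a positive eigenvector of the nonnegative irreducible matrix $D$, so by the Perron--Frobenius theorem $k=\lambda_1$ is the spectral radius. Either way, plugging into the identity above yields $W(G_1)=\tfrac{kn}{2}=W(G_2)$.

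There is no real obstacle in this argument; the only point requiring a word of care is justifying that the relevant data ($n$ and the value $kn$, equivalently $k$) are invariants of the $D$-spectrum, which is immediate from the shared hypothesis together with the Perron--Frobenius remark.
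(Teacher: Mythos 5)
Your proposal is correct and follows essentially the same route as the paper: both compute $2W=\mathbf{1}^{\top}D\mathbf{1}=kn$ using the all-ones eigenvector, and both invoke the fact that $k=\lambda_1$ together with the equality of order and spectrum to conclude. Your explicit Perron--Frobenius remark is a slightly more careful justification of the step the paper states tersely, but the argument is the same.
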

\begin{proof}  Note that $G_1, G_2$ have the same largest distance eigenvalue $\lambda_1$. Then $\lambda_1=k$ corresponds to the all-ones eigenvector, and
it follows that the Wiener index is $\frac{n \lambda_1}{2}$ (since all the row sums of the distance matrices of $G_1, G_2$ are the same by assumption).
\end{proof}

\begin{remark} Note that in order to find counterexamples for Question \ref{Wiener}, one cannot use vertex transitive graphs nor graphs in association schemes since they satisfy the condition of Proposition \ref{propo:profiles}.
\end{remark}


Note that distance-regular graphs are transmission-regular, hence the next corollary follows directly from Proposition \ref{propo:profiles}.

\begin{corollary} If two distance-regular graphs $G_1$ and $G_2$ are $k$-transmission-regular, then
their Wiener indices are the same.
\end{corollary}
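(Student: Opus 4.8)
The plan is to obtain this as an immediate consequence of Proposition \ref{propo:profiles}, the only extra ingredient being the observation flagged in the sentence preceding the corollary: every distance-regular graph is transmission-regular. I would justify that observation by recalling that in a distance-regular graph the number of vertices at distance $i$ from a vertex $v$ is the valency $k_i$, which does not depend on $v$; hence the row of the distance matrix indexed by $v$ sums to $\sum_i i\,k_i$ for every $v$, i.e., the distance matrix has constant row sum. Thus $G_1$ and $G_2$ are transmission-regular, and — being $k$-transmission-regular with a common transmission value $k$, which is the Perron eigenvalue $\lambda_1$ of the distance matrix (with the all-ones vector as eigenvector) — they satisfy the hypotheses of Proposition \ref{propo:profiles}.

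Then I would simply invoke the conclusion of that proposition: for a $k$-transmission-regular graph on $n$ vertices, summing the $n$ rows of the distance matrix gives $\sum_{i=1}^{n}\sum_{j=1}^{n} d_{ij} = nk$, so $W = \tfrac12 nk = \tfrac{n\lambda_1}{2}$. Since this quantity depends only on $n$ and $k$, and both graphs share these parameters, we conclude $W(G_1) = \tfrac{nk}{2} = W(G_2)$.

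I do not anticipate any genuine difficulty here; the corollary is essentially a short remark. The one point deserving a line of care is the bridge ``distance-regular $\Rightarrow$ transmission-regular,'' after which the equality of Wiener indices is inherited verbatim from the computation in the proof of Proposition \ref{propo:profiles}.
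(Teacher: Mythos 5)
Your proof matches the paper's, which likewise obtains the corollary from Proposition~\ref{propo:profiles} after the one-line observation that distance-regular graphs are transmission-regular (your justification via the valencies $k_i$, i.e.\ $T(v)=\sum_i i\,k_i$ independent of $v$, is the standard way to see this). The only caveat --- present equally in the paper's statement --- is that the final step $W(G_1)=\tfrac{nk}{2}=W(G_2)$ requires the two graphs to have the same order $n$, which the corollary's stated hypotheses do not supply but which is implicit in the $D$-cospectrality assumption of Proposition~\ref{propo:profiles}; without it the claim fails, e.g.\ $C_5$ and $K_7$ are both distance-regular and $6$-transmission-regular yet have Wiener indices $15$ and $21$.
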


\section{Transmission-regular graphs}\label{sec:transmissionregular}
The Wiener index is also known as the distance of a graph or graph transmission. The \emph{transmission index of a vertex} $v$, denoted $T(v)$, is the sum of the entries in the row corresponding to $v$ in the distance matrix of $G$. The \emph{transmission index of a graph} $G$ is the sum of all the transmission indices of its vertices. Given a graph $G$ and vertex $v\in V(G)$, let $S_i(v;G)$ denote the set of vertices of $G$ at distance $i$ from $v$. A transmission-regular graph $G$ has maximum transmission index if the transmission index of its vertices is the greatest possible over all graphs of the same order. Let $diam(G)$ be the diameter of a graph $G$.

\begin{observation}
\label{transmission}
For any vertex $v$ in an $n$-vertex connected graph $G$, we have $n-1\leq T(v)\leq \frac{n(n-1)}{2}$. The lower bound holds with equality only when $v$ is a dominating vertex; the upper bound holds with equality only when $v$ is an end-vertex of the path $P_n$.
\end{observation}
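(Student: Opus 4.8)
The plan is to handle the two bounds separately, in both cases via the distance partition of $V(G)$ about $v$. Let $e$ be the eccentricity of $v$, so that $V(G) = S_0(v;G) \cup S_1(v;G) \cup \dots \cup S_e(v;G)$ with $S_0(v;G) = \{v\}$, every level nonempty, and $\sum_{i=0}^{e} |S_i(v;G)| = n$; in this language $T(v) = \sum_{i=0}^{e} i\,|S_i(v;G)|$. The lower bound is then immediate: each of the $n-1$ vertices $u \neq v$ satisfies $d(u,v) \geq 1$, so $T(v) \geq n-1$, with equality precisely when $S_1(v;G) = V(G)\setminus\{v\}$, i.e.\ $v$ is adjacent to every other vertex, i.e.\ $v$ is a dominating vertex.

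For the upper bound I would first rewrite
\[
T(v) \;=\; \sum_{i=1}^{e} i \;+\; \sum_{i=1}^{e} i\,\bigl(|S_i(v;G)| - 1\bigr) \;=\; \frac{e(e+1)}{2} + \sum_{i=1}^{e} i\,\bigl(|S_i(v;G)| - 1\bigr),
\]
and then, using $|S_i(v;G)| - 1 \geq 0$ together with $\sum_{i=1}^{e}\bigl(|S_i(v;G)|-1\bigr) = n-1-e$, bound $\sum_{i=1}^{e} i\,(|S_i(v;G)|-1) \le e(n-1-e)$. This yields $T(v) \le g(e)$ with $g(e) = \tfrac{e(e+1)}{2} + e(n-1-e) = en - \tfrac{e(e+1)}{2}$. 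A one-line computation gives $g(e) - g(e-1) = n - e$, which is positive for $1 \le e \le n-1$, so $g$ is strictly increasing on $\{1,\dots,n-1\}$; since $1\le e\le n-1$ always holds, $T(v) \le g(n-1) = \tfrac{n(n-1)}{2}$.

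For the equality case of the upper bound: strict monotonicity of $g$ forces $e = n-1$, and then $n-1-e = 0$ forces $|S_i(v;G)| = 1$ for every $i$. Writing $v_i$ for the unique vertex at distance $i$ from $v$ (so $v_0 = v$), one observes that $v_i$ cannot be adjacent to any $v_j$ with $|i-j|\ge 2$ (otherwise $d(v,v_j) \le i+1 < j$ or symmetrically), while $v_i$ must be adjacent to $v_{i-1}$, its only candidate at distance $i-1$. Hence the edge set of $G$ is exactly $\{v_{i-1}v_i : 1 \le i \le n-1\}$, so $G = P_n$ and $v = v_0$ is an end-vertex. (The degenerate case $n=1$ is trivial, with both bounds equal to $0$.)

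I expect the only mildly delicate point to be this last characterization: deducing that $|S_i(v;G)| = 1$ for all $i$ actually forces $G$ to be the path $P_n$, rather than merely to contain a spanning path, which is where one must rule out chords $v_iv_j$ using the distance constraints. Everything else reduces to a single elementary inequality and the optimization of the quadratic $g$.
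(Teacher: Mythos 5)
Your argument is correct and complete. Note that the paper states this as an \emph{Observation} and supplies no proof at all, so there is nothing to compare against; your write-up is in effect the missing justification. All the steps check out: the lower bound and its equality case are immediate; for the upper bound, the estimate $\sum_{i=1}^{e} i\,(|S_i(v;G)|-1)\le e(n-1-e)$ is valid since each summand is nonnegative and bounded by $e$ times the corresponding term of $\sum_{i}(|S_i(v;G)|-1)=n-1-e$, the difference $g(e)-g(e-1)=n-e$ is computed correctly, and the equality analysis (forcing $e=n-1$, then $|S_i(v;G)|=1$ for all $i$, then ruling out chords $v_iv_j$ with $|i-j|\ge 2$ by the triangle inequality) does pin down $G=P_n$ with $v$ an end-vertex rather than merely a spanning path, which is exactly the point you flagged as delicate. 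A marginally shorter route to the same bound: since every level $S_1(v;G),\dots,S_e(v;G)$ is nonempty, at least $j$ vertices lie within distance $j$ of $v$ for each $j$, so sorting the $n-1$ distances from $v$ in increasing order gives $d_j\le j$ and hence $T(v)\le\sum_{j=1}^{n-1}j=\frac{n(n-1)}{2}$, with the same equality analysis. Your version via the quadratic $g$ is equivalent and equally rigorous.
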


\begin{theorem}\label{prop:transreg}
If $G$ is a connected transmission-regular graph with $n$ vertices, then $n-1\leq T(v)\leq \lfloor\frac{n^2}{4}\rfloor$. For $n>2$, equality for the lower bound holds only when $G$ is the complete graph $K_n$; equality for the upper bound holds only when $G$ is the cycle graph $C_{n}$.
\end{theorem}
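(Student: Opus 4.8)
The plan is to treat the two bounds separately. The lower bound is immediate from Observation~\ref{transmission}: there $T(v)\ge n-1$ for every vertex of any connected graph, with equality exactly when $v$ is a dominating vertex, so in a transmission-regular graph $T(v)=n-1$ forces \emph{every} vertex to be dominating, i.e.\ $G=K_n$, and conversely $T(v)=n-1$ in $K_n$. For $n>2$ this settles the lower bound together with its equality case in one line, so the real work lies in the upper bound.

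The upper bound rests on a single structural fact, which I would establish first: a connected transmission-regular graph on $n>2$ vertices has no cut vertex. Summing $d(x,u)-d(x,w)$ over all $x\in V$ for an edge $uw$, and using that each summand lies in $\{-1,0,1\}$, one sees that transmission-regularity is equivalent to distance-balancedness, namely $|W_{uw}|=|W_{wu}|$ for every edge $uw$, where $W_{uw}=\{x\in V:d(x,u)<d(x,w)\}$ \cite{Balakrishnan2009}. If $u$ were a cut vertex, choose the vertex set $A$ of a smallest component of $G-u$, so $|A|\le(n-1)/2$, and a neighbour $w\in A$ of $u$. Every vertex outside $A\cup\{u\}$ reaches $w$ only through $u$ and hence lies in $W_{uw}$, as does $u$ itself, so $|W_{uw}|\ge n-|A|$; on the other hand $W_{wu}\subseteq A$. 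Then $n-|A|\le|W_{uw}|=|W_{wu}|\le|A|$ contradicts $|A|\le(n-1)/2$.

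For the upper bound itself, fix a vertex $v$, let $d$ be the largest distance from $v$ to any other vertex, and put $n_i=|S_i(v;G)|$, so $n_0=1$, $n_i\ge 1$ for $0\le i\le d$, $\sum_{i=0}^d n_i=n$, and $T(v)=\sum_{i=1}^d i\,n_i$. If some level with $1\le i\le d-1$ had $n_i=1$, its unique vertex would separate $S_0\cup\cdots\cup S_{i-1}$ from $S_{i+1}\cup\cdots\cup S_d$ (an edge of $G$ joins only vertices in consecutive or equal levels), contradicting the previous paragraph; hence $n_i\ge 2$ for $1\le i\le d-1$, which forces $n\ge 2d$, i.e.\ $d\le\lfloor n/2\rfloor$. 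Since shifting mass to higher-indexed levels only increases $\sum_i i\,n_i$, for each fixed $d$ the sum is maximized uniquely by taking $n_i=2$ for $1\le i\le d-1$ and $n_d=n-2d+1$, which gives $T(v)\le d(n-d)$; and since $d(n-d)$ is strictly increasing in $d$ on $[0,n/2]$, its maximum over integers $1\le d\le\lfloor n/2\rfloor$ equals $\lfloor n^2/4\rfloor$ and is attained only at $d=\lfloor n/2\rfloor$.

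It remains to read off equality. Equality in the upper bound forces $d=\lfloor n/2\rfloor$ together with the unique extremal profile above, and in both parities that profile has $n_1=2$; thus in an extremal graph every vertex has degree $2$, and a connected $2$-regular graph on $n$ vertices is $C_n$. I would finish by verifying the converse directly: from any vertex of $C_n$ the level sizes are $1,2,\dots,2,1$ when $n$ is even and $1,2,\dots,2$ when $n$ is odd, so $T(v)=\lfloor n^2/4\rfloor$ in either case. I expect the cut-vertex claim to be the main obstacle, as it is the one place where transmission-regularity is used essentially --- the level-counting bound is false without it, witnessed by the path $P_n$ --- whereas the remaining steps amount to optimizing a linear functional over an integer polytope and then tracing back the equality conditions.
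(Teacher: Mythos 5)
Your proof is correct, and it shares the paper's two key ingredients --- the no-cut-vertex lemma and the BFS-level count with $|S_i(v;G)|\geq 2$ at intermediate levels --- but it packages them differently. For the cut-vertex claim, the paper compares transmissions directly: with $v$ a cut vertex, $G_1$ a smallest component of $G-v$ and $u\in V(G_1)$ a neighbour of $v$, it shows $T(u)\geq 1+T(v)-|V(G_1)|+|V(G_2)|>T(v)$; your route through the distance-balanced reformulation $|W_{uw}|=|W_{wu}|$ of \cite{Balakrishnan2009} is essentially the same counting in different clothing, and your one-line justification of that equivalence is sound. The more genuine divergence is in the upper bound: the paper argues by contradiction about a transmission-regular graph of maximum transmission index, splitting into cases on vertex degrees and using $|S_1(v,G)|\geq 3$ at a vertex of degree at least $3$ to force the strict inequality $T(v)<dn-d^2\leq\lfloor n^2/4\rfloor$, whence the extremal graph is $C_n$; you instead bound an arbitrary vertex directly, $T(v)\leq d(n-d)\leq\lfloor n^2/4\rfloor$ with $d\leq\lfloor n/2\rfloor$, and then trace equality back through the (forced) level profile to conclude that every vertex has degree $2$, hence $G\simeq C_n$. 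Your version avoids the extremal-graph framing and the degree case analysis, and it makes the equality characterization explicit rather than implicit in a maximality argument, at the cost of the small extra bookkeeping that $d(n-d)$ is strictly increasing up to $\lfloor n/2\rfloor$ and that the extremal profile pins down $n_1=2$ in both parities --- all of which you handle correctly.
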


\begin{proof}
To prove the lower bound, note that by Observation \ref{transmission}, $T(v)\geq n-1$ for any $v\in V(G)$ and for $n>2$, equality holds only when $G$ is the complete graph $K_n$.

Let us now show the upper bound.
\begin{claim}\label{claim1}
If $G$ is a transmission-regular graph with $n>2$ vertices, $G$ has no cut vertex.
\end{claim}

Suppose for contradiction that $G$ has a cut vertex $v$. Let $G_1$ be the smallest component of $G-v$, $u$ be a vertex in $G_1$ adjacent to $v$, and $G_2=G-G_1-\{v\}$. Note that $d(u,x)=d(v,x)+1$ for $x\in V(G_2)$ since $v$ is a cut vertex, and that $d(u,x)\geq d(v,x)-1$ for $x\in V(G_1)$ since $u$ is adjacent to $v$. Then,

\begin{eqnarray*}
&&T(u)=\sum_{x\in V(G)}d(u,x)=\\
&=&d(u,v)+\sum_{x\in V(G_1)}d(u,x)+\sum_{x\in V(G_2)}d(u,x)\\
&\geq&d(u,v)+\sum_{x\in V(G_1)}(d(v,x)-1)+\sum_{x\in V(G_2)}(d(v,x)+1)\\
&=&1+T(v)-|V(G_1)|+|V(G_2)|>T(v),\\
\end{eqnarray*}

\noindent where the last inequality follows from the assumption that $G_1$ is the smallest component of $G-v$; this contradicts the fact that $G$ is transmission-regular.

Now, let $G$ be a transmission-regular graph with $n>2$ vertices and maximum transmission index. By Claim \ref{claim1}, $G$ cannot have a vertex of degree 1, since then $G$ will have a cut vertex and will not be transmission-regular. If every vertex of $G$ has degree 2, then $G$ is a cycle, which is transmission-regular and has $T(v)=\lfloor\frac{n^2}{4}\rfloor$. Now suppose for contradiction that $G$ has a vertex $v$ of degree greater than 2. Let $d=\max_{u\in V}d(u,v)$, i.e., $d$ is the largest index $i$ for which $S_i(v,G)$ is nonempty. If $|S_i(v,G)|=1$ for any $i\in \{1,\ldots,d-1\}$, then $G$ has a cut vertex --- a contradiction to Claim \ref{claim1}. Thus, $|S_1(v,G)|\geq 3$ and $|S_i(v,G)|\geq 2$ for all $i\in \{2,\ldots,d-1\}$; moreover, $d\leq (n-1)/2$. Let $v'$ be a vertex of a cycle $C_n$. Then,

\begin{eqnarray*}
T(v)=\sum_{i=1}^d|S_i(v,G)|\cdot i&=&
\sum_{i=1}^{d-1}2i+\sum_{i=1}^{d-1}(|S_i(v,G)|-2)i+|S_d(v,G)|d\\
&<&\sum_{i=1}^{d-1}2i+\sum_{i=1}^{d-1}(|S_i(v,G)|-2)d+|S_d(v,G)|d\\
&=&d(d-1)+d\sum_{i=1}^d|S_i(v,G)|-2d(d-1)\\
&=&d(d-1)+d(n-1)-2d(d-1)\\
&=&dn-d^2\leq \left\lfloor \frac{n^2}{4}\right\rfloor =T(v').
\end{eqnarray*}
\noindent This contradicts the assumption that $G$ has maximum transmission index. Thus, $G$ cannot have vertices of degree greater than 2, so the only graph with maximum transmission index is the cycle $C_n$. Thus, for an arbitrary transmission-regular graph, $T(v)\leq\lfloor\frac{n^2}{4}\rfloor$.

\end{proof}

The second part of the proof of Theorem \ref{prop:transreg} guarantees the following:

\begin{corollary}
If $G$ is an $n$-vertex connected graph, then for any vertex $v$, $T(v) \le \frac{diam(G)\left(diam(G)-1 \right)}2 + (n-diam(G))diam(G)$,
and equality holds only when $G$ is a graph obtained by joining one end-vertex of a path $P$ on $diam(G)$ vertices to all the vertices of an arbitrary graph $H$ on $n-diam(G)$ vertices.
\end{corollary}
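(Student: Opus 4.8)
The plan is to reuse, almost verbatim, the layer-counting estimate from the second part of the proof of Theorem~\ref{prop:transreg}, but feeding in only the trivial fact that every distance layer around a vertex is nonempty, instead of the sharper lower bounds $|S_i(v;G)|\ge 2$ exploited there. First I would fix a vertex $v$, set $d=diam(G)$, and let $e=\max_{u\in V}d(u,v)$ be the eccentricity of $v$, so $e\le d$. Writing $S_i=S_i(v;G)$ for $0\le i\le e$, we have $|S_0|=1$, every $S_i$ is nonempty, and $\sum_{i=0}^{e}|S_i|=n$, hence $\sum_{i=1}^{e}\big(|S_i|-1\big)=n-1-e$.

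The core computation is then
\begin{eqnarray*}
T(v)=\sum_{i=1}^{e}i\,|S_i| &=& \sum_{i=1}^{e}i+\sum_{i=1}^{e}i\,\big(|S_i|-1\big)\\
&\le& \binom{e+1}{2}+e\sum_{i=1}^{e}\big(|S_i|-1\big)=\binom{e+1}{2}+e\,(n-1-e)\\
&=& \binom{e}{2}+e\,(n-e),
\end{eqnarray*}
where the inequality uses $i\le e$ and $|S_i|-1\ge 0$. Next I would observe that $g(x):=\binom{x}{2}+x(n-x)$ satisfies $g(x+1)-g(x)=n-x-1$, so $g$ is strictly increasing on $\{1,\dots,n-1\}$ (each increment is at least $1$). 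Since $e\le d\le n-1$, this yields $T(v)\le g(e)\le g(d)=\frac{d(d-1)}{2}+(n-d)d$, which is exactly the claimed bound.

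For the equality case, suppose $T(v)$ equals the bound. Then $g(e)=g(d)$, and strict monotonicity of $g$ forces $e=d$; moreover equality must hold in the displayed estimate, i.e.\ $(d-i)\big(|S_i|-1\big)=0$ for every $i$, so $|S_i|=1$ for $1\le i\le d-1$ and $|S_d|=n-d$. Let $s_i$ be the unique vertex of $S_i$ for $0\le i\le d-1$, with $s_0=v$. Since each $s_i$ ($i\ge1$) must have a neighbour in $S_{i-1}=\{s_{i-1}\}$, the vertices $v=s_0,s_1,\dots,s_{d-1}$ form an induced path $P$ on $d$ vertices; each vertex of $S_d$ must have a neighbour in $S_{d-1}=\{s_{d-1}\}$, hence is adjacent to $s_{d-1}$, while it can be adjacent to no $s_j$ with $j\le d-2$ (such an edge would place it at distance $\le d-1$ from $v$). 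As there are no edges between non-consecutive layers, $G$ is exactly the graph obtained from the path $P$ on $d=diam(G)$ vertices by joining its end-vertex $s_{d-1}$ to every vertex of the otherwise arbitrary graph $H:=G[S_d]$ on $n-d$ vertices; and conversely, for $d\ge 2$ every such graph has diameter $d$ and the other end-vertex of $P$ attains equality.

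The argument is essentially routine. The only points that need a little care are the bookkeeping identity $\binom{e+1}{2}+e(n-1-e)=\binom{e}{2}+e(n-e)$, making the monotonicity of $g$ strict (so that equality really does force $e=d$), and checking that the extremal structure is exactly as stated — the path edges and the join to $H$ are forced, whereas $H$ itself and no additional edges are free. Degenerate small cases, such as $d=1$ (where $G=K_n$), fit the description with $H$ complete.
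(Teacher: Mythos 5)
Your proposal is correct and follows exactly the route the paper intends: the corollary is stated as a consequence of the layer-counting estimate in the second part of the proof of Theorem~\ref{prop:transreg}, with the lower bound $|S_i(v;G)|\ge 2$ relaxed to $|S_i(v;G)|\ge 1$, which is precisely your computation. You in fact supply details the paper leaves implicit (the passage from the eccentricity of $v$ to $diam(G)$ via monotonicity of $g$, and the forced structure in the equality case), and these all check out.
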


\section{Wiener index of some tree-like graphs}\label{sec:Wtreelikegraphs}
The  Wiener  index  of  trees  and  tree-like  graphs  has been  widely  studied. In this section we will focus on two families of tree-like graphs: block-clique graphs and linear $k$-trees.

Around 1990 it was shown independently in
several papers that there is a connection between the Wiener index $W$ and the
Laplacian eigenvalues $\mu_1\geq \mu_2\geq \cdots \geq \mu_n=0$ of a tree $T$ on $n$ vertices \cite{Me1989,Me1990,M1991}:
\begin{equation}\label{WT}
W(T)=n\sum_{i=1}^{n-1}\frac{1}{\mu_i}.
\end{equation}

 A natural generalization of trees are \emph{block-clique graphs}; these are exactly the connected graphs in which every block (i.e., every maximal 2-connected subgraph) is a clique. In \cite{BJT2010} block-clique graphs were characterized by using its Wiener index. In this section we give a generalization of (\ref{WT}) for block-clique graphs having the same block size.

Given a graph $G$, we define $f:E(G)\rightarrow \mathbb{N}$ where for $e\in E(G)$, $f(e)$ is the number of pairs of vertices $u,v$ of $G$ such that the shortest $u,v$-path contains the edge $e$. We call $f(e)$ the \textit{contribution of  e} in the Wiener index of $G$. Note that $W(G)=\sum_{e\in E(G)} f(e)$.

\begin{theorem}
Let $G$ be a connected graph of order $n$ where each block is a clique of order $b$. Then

	$$W(G)=\frac{nb}{2}\,\sum_{i=1}^{n-1}\frac{1}{\mu_i}$$

	where $\mu_1\geq \mu_2\geq \cdots \geq \mu_n=0$ are the Laplacian eigenvalues of $G$.
\end{theorem}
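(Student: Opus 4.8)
The plan is to route through effective resistances and the Kirchhoff index, mimicking the proof of~(\ref{WT}) for trees. Write $L^{\dagger}$ for the Moore--Penrose pseudoinverse of the Laplacian $L$ of $G$, let $e_u$ denote the standard basis vector indexed by $u$, and for $u,v\in V(G)$ let $r(u,v)=(e_u-e_v)^{T}L^{\dagger}(e_u-e_v)=L^{\dagger}_{uu}+L^{\dagger}_{vv}-2L^{\dagger}_{uv}$ be the effective resistance between $u$ and $v$. Since $\mathbf{1}$ spans the kernel of $L^{\dagger}$, summing this over all unordered pairs collapses the off-diagonal contributions and gives
$$\sum_{u<v} r(u,v)\;=\;n\,\operatorname{tr}(L^{\dagger})\;=\;n\sum_{i=1}^{n-1}\frac{1}{\mu_i}.$$
Thus it suffices to prove that, in a connected graph all of whose blocks are cliques of order $b$, one has $r(u,v)=\tfrac{2}{b}\,d(u,v)$ for every pair $u,v$; substituting this into the left side turns $\sum_{u<v}r(u,v)$ into $\tfrac{2}{b}\sum_{u<v}d(u,v)=\tfrac{2}{b}W(G)$, and rearranging yields $W(G)=\tfrac{nb}{2}\sum_{i=1}^{n-1}\tfrac1{\mu_i}$.

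The identity $r(u,v)=\tfrac{2}{b}d(u,v)$ I would establish from two standard facts. First, in $K_b$ the effective resistance between any two vertices equals $\tfrac2b$: all pairs are equivalent by symmetry, and since the only nonzero Laplacian eigenvalue of $K_b$ is $b$, the common value works out to $\tfrac2b$. Second, effective resistance is additive across cut vertices: if $w$ separates $u$ from $v$ in $G$, then $r(u,v)=r(u,w)+r(w,v)$, because all current must pass through $w$, so the two sides act as resistors in series and neither side's resistance is affected by the other. Now fix $u,v\in V(G)$. Because $G$ is block-clique, the block--cut structure forces a unique alternating path $B_1,w_1,B_2,w_2,\dots,w_{k-1},B_k$ in its block--cut tree with $u\in B_1$ and $v\in B_k$, so that $u\,w_1\,w_2\cdots w_{k-1}\,v$ is the unique shortest $u$--$v$ path and has length $k=d(u,v)$. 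Iterating additivity along $w_1,\dots,w_{k-1}$ and then applying the $K_b$ computation to each resistance that now lives inside a single block $B_i\cong K_b$ gives $r(u,v)=k\cdot\tfrac2b=\tfrac2b d(u,v)$.

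The step that needs genuine care is this last one: one must verify that between two vertices of a block-clique graph the geodesic is unique and passes through exactly $d(u,v)$ blocks --- i.e.\ ``distance $=$ length of the block-path'' --- so that the additivity identity may be iterated cleanly block by block. This is routine once the block--cut tree of $G$ is in hand, but it is precisely here that the hypothesis is used: requiring every block to be a clique of the common order $b$ is what makes the per-block resistance the constant $\tfrac2b$, independent of which two vertices of the block one looks at. (As a sanity check, when $b=2$ every block is an edge, $r(u,v)=d(u,v)$, and the statement collapses to the Mohar--Merris formula~(\ref{WT}) for trees.) A more combinatorial alternative would instead compute $W(G)=\sum_{e\in E(G)}f(e)$ directly --- for an edge $z_iz_j$ of a block $B$ one finds $f(z_iz_j)=n_in_j$, where $n_i$ and $n_j$ are the sizes of the two branches of $G$ hanging at $z_i$ and $z_j$ off $B$ --- and then match the resulting branch-size expansion against the corresponding expansion of $\sum_{i=1}^{n-1}1/\mu_i$; but the resistance argument is cleaner, and I would present that one.
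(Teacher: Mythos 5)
Your proof is correct, but it takes a genuinely different route from the paper. The paper works with the characteristic polynomial of the Laplacian: Vieta's formulas give $\sum_i 1/\mu_i=-c_2/c_1$, the coefficients $c_1$ and $c_2$ are interpreted via the all-minors matrix-tree theorem as weighted counts of spanning trees and of two-component spanning forests, and a careful double count (every pair $u,v$ charges its unique geodesic edge inside the block where the forest splits) reduces $c_2$ to a multiple of $W(G)$; the multiplicative constant is the Cayley-type sum $\sum_{i=0}^{b-2}\binom{b-2}{i}(i+1)^{i-1}(b-1-i)^{b-3-i}$, which must then be identified with $2b^{b-3}$. You instead pass through the Kirchhoff index: the identity $\sum_{u<v}r(u,v)=n\operatorname{tr}(L^{\dagger})=n\sum_i 1/\mu_i$ is correct as you derive it (the cross terms vanish because $L^{\dagger}\mathbf{1}=0$), and the reduction of the theorem to the pointwise identity $r(u,v)=\tfrac{2}{b}\,d(u,v)$ is clean. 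The facts you invoke for that identity are all standard and correctly deployed: $r=2/b$ in $K_b$, the series law at a cut vertex, the (implicit but true) principle that branches hanging off a block at its cut vertices carry no current and hence do not change resistances within the block, and the observation that $d(u,v)$ equals the number of blocks on the block--cut-tree path, which holds because any $u$--$v$ walk must visit the intermediate cut vertices in order and spend at least one edge in each block. What your approach buys is the elimination of the spanning-forest count and of the binomial identity (which the paper only justifies by citation to OEIS); it also makes transparent why the answer is $\tfrac{b}{2}$ times the tree formula, and it would extend with no extra work to blocks of mixed sizes, yielding a resistance-weighted Wiener index. What the paper's approach buys is self-containedness at the level of linear algebra: it never needs the electrical interpretation of $L^{\dagger}$, and its forest-counting argument is exactly the mechanism by which one usually proves the Kirchhoff-index spectral formula you take as a black box. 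Either write-up is acceptable; if you present yours, state the cut-vertex deletion principle for effective resistance explicitly rather than leaving it inside the series-law step.
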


\begin{proof}
Let $$p(x)=c_nx^n+c_{n-1}x^{n-1}+\cdots +c_2x^2+c_1x+c_0$$
	be the characteristic polynomial of the Laplacian matrix of $G$, where $c_n=1$. By Vieta's formula, we have
	$$\sum_{1\leq i_1<\cdots <i_k\leq  n}\mu_{i_1}\mu_{i_2}\dots \mu_{i_k}=(-1)^k\frac{c_{n-k}}{c_n}=(-1)^{k}c_{n-k} \quad \text{for }k=1,\ldots,n.$$
	In particular,
	$$(-1)^{n-1}\,c_1=\mu_1\cdots \mu_{n-1}$$
	since $\mu_n=0$; moreover,
	$$(-1)^{n-2}c_2=\sum_{1\leq i_1<\cdots <i_{n-2}\leq  n}\mu_{i_1}\mu_{i_2}\dots \mu_{i_{n-2}}.$$
	None of $\mu_1,\dots ,\mu_{n-1}$ is zero, as $G$ is connected. Now, note that
	\begin{equation}\label{c2c1frac1}
	-\frac{c_2}{c_1}=\sum_{i=1}^{n-1}\frac{1}{\mu_i}.
	\end{equation}
	
	It is well known that the coefficients of $p(x)$ have the following combinatorial interpretation for every graph $G$ (see, for example, \cite[p.38]{cds}):
	$$c_k=(-1)^{n-k}\sum_{F\in \mathcal{F}_k(G)}\gamma(F)$$
	where $\mathcal{F}_k(G)$ is the set of all spanning forests of $G$ having exactly $k$ components, and $\gamma(F)=n_1n_2\dots n_k$ where $n_1,n_2,\dots ,n_k$ are the orders of the components of $F$. So,
	$$c_1=(-1)^{n-1}\,n\,(\text{number of spanning trees of G}).$$
	$T$ is a spanning tree of of $G$ if and only if $T$ induces a spanning tree of $B$ for every block $B$ of $G$. The number of spanning trees of a block of $G$ is $b^{b-2}$, since every block of $G$ is a  $b$-clique. Hence, the number of spanning trees of $G$ is $b^{(b-2)r}$ where $r$ is the number of blocks of $G$, implying that
	\begin{equation}\label{c1}
	c_1=(-1)^{n-1}\,n\,b^{(b-2)r}.
	\end{equation}
	By the combinatorial interpretation of the coefficients,
	\begin{equation}
	c_2=(-1)^{n-2}\sum_{F\in \mathcal{F}_2(G)}\gamma(F).
	\end{equation}
	Now, we shall show that
	\begin{equation}\label{c2wiener}
	b^{(b-2)(r-1)}\left(\sum_{i=0}^{b-2}{b-2 \choose i}(i+1)^{i-1}(b-1-i)^{b-3-i}\right)W(G)=\sum_{F\in \mathcal{F}_2(G)} \gamma(F).
	\end{equation}
	Observe that there exists a unique shortest path between every pair of vertices in $G$. If $F\in \mathcal{F}_2(G)$ then there exist exactly one block $B$ of $G$ such that $F$ induces a spanning forest on $B$ with two components. If $u$ and $v$ are two vertices of $F$ which belong to different components of $F$, then $B$ contains exactly one edge which belongs to the shortest $u,v$-path, say $e$. Let $e=u'v'$ be such that the shortest $u,v$-path is in the form of $u\cdots u'v'\cdots v$. The contribution of the edge $e$ in the Wiener index is counted $b^{(b-2)(r-1)}\left(\sum_{i=0}^{b-2}{b-2 \choose i}(i+1)^{i-1}(b-1-i)^{b-3-i}\right)$ many times on the right side of \eqref{c2wiener}; this is because for every block other than $B$, we can pick any spanning tree of the block (there are $b^{(b-2)(r-1)}$ such choices), and because there are $\sum_{i=0}^{b-2}{b-2 \choose i}(i+1)^{i-1}(b-1-i)^{b-3-i}$ many choices to pick a spanning forest of $B$  with two components such that $u'$ and $v'$ belong to different components. To see the latter, consider the components $T_1$ and $T_2$ of the spanning forest of $B$ induced by $F$ such that $u'\in V(T_1)$ and $v'\in V(T_2)$, and $T_1$ and $T_2$ have orders $i+1$ and $b-1-i$ respectively where $0\leq i\leq b-2$. There are ${b-2\choose i}$ choices to partition the vertices of $B$ into two sets each of them containing exactly one of $u'$ and $v'$, and once the partition is determined, there are $(i+1)^{i-1}$ and $(b-1-i)^{b-3-i}$ choices to make the trees $T_1$ and $T_2$ respectively, as $B$ is a clique.
	
	By \eqref{c1} and \eqref{c2wiener}, we get
	$$-\frac{c_2}{c_1}=\frac{1}{n\,b^{b-2}}\left(\sum_{i=0}^{b-2}{b-2 \choose i}(i+1)^{i-1}(b-1-i)^{b-3-i}\right)\,w(G),$$

which can be simplified (cf. \cite{oeis}) to

$$-\frac{c_2}{c_1}=\frac{1}{n\,b^{b-2}}\left(2b^{b-3}\right)\,w(G).$$

	Equating the right sides of the latter and \eqref{c2c1frac1}, we obtain the desired result.
\end{proof}

Note that as a corollary we obtain the result of Mohar and Merris, since a tree is a block-clique graph where every block of has order $2$.
\begin{corollary} \cite{Me1989,Me1990,M1991}
	If $G$ is a tree of order $n$ then
	$$W(G)=n\,\sum_{i=1}^{n-1}\frac{1}{\mu_i}$$
		where $\mu_1\geq \mu_2\geq \cdots \geq \mu_n=0 $ are the Laplacian eigenvalues of $G$.
\end{corollary}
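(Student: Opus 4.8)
The plan is to obtain this as an immediate specialization of the theorem just proved. A tree $G$ on $n$ vertices is precisely a connected graph every block of which is an edge, that is, a clique of order $b = 2$, and conversely a block-clique graph all of whose blocks have order $2$ is acyclic and hence a tree. So the hypothesis of the theorem is met with $b = 2$, and substituting $b = 2$ into $W(G) = \frac{nb}{2}\sum_{i=1}^{n-1}\frac{1}{\mu_i}$ yields $W(G) = n\sum_{i=1}^{n-1}\frac{1}{\mu_i}$. As a sanity check on the intermediate quantities appearing in the theorem's proof at $b = 2$: a block has $b^{b-2} = 2^{0} = 1$ spanning tree, and $2b^{b-3} = 2\cdot 2^{-1} = 1$ (equivalently, the sum $\sum_{i=0}^{b-2}\binom{b-2}{i}(i+1)^{i-1}(b-1-i)^{b-3-i}$ collapses to the single term $i = 0$, which equals $1$), so the formula reduces correctly.

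Alternatively, I would give a self-contained proof by specializing the Mohar--Merris argument and not citing the general block-clique result. First note that, since $G$ is connected, $\mu_n = 0$ is simple and $\mu_1,\dots,\mu_{n-1} \neq 0$, so $\sum_{i=1}^{n-1}\frac{1}{\mu_i} = -c_2/c_1$, where $c_1, c_2$ are the coefficients of $x$ and $x^2$ in the characteristic polynomial of the Laplacian of $G$. Next, invoke the forest interpretation of the Laplacian coefficients used in the proof of the theorem: $c_1 = (-1)^{n-1} n \cdot \tau(G)$, where $\tau(G)$ is the number of spanning trees of $G$, which equals $1$ for a tree, so $c_1 = (-1)^{n-1} n$; and $c_2 = (-1)^{n-2}\sum_{F \in \mathcal{F}_2(G)}\gamma(F)$. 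Finally, observe that in a tree the two-component spanning forests are exactly the subgraphs obtained by deleting a single edge: removing $e$ leaves components of orders $n_1(e)$ and $n_2(e) = n - n_1(e)$, so $\sum_{F \in \mathcal{F}_2(G)}\gamma(F) = \sum_{e \in E(G)} n_1(e)n_2(e) = \sum_{e \in E(G)} f(e) = W(G)$, using the fact that $f(e) = n_1(e)n_2(e)$ in a tree. Combining these gives $-c_2/c_1 = W(G)/n$, which is the claim.

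I expect no real obstacle here: once the theorem is in hand the corollary is a one-line substitution, and on the self-contained route the only mildly substantive point is the bijection between $\mathcal{F}_2(G)$ and single-edge deletions, which is exactly where the acyclicity of the tree enters; everything else is bookkeeping.
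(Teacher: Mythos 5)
Your proposal is correct and its main route is exactly the paper's: the corollary is obtained by setting $b=2$ in the block-clique theorem, since a tree is precisely a block-clique graph all of whose blocks are $K_2$'s. The self-contained alternative you sketch (spanning forests of a tree with two components correspond to single-edge deletions, giving $\sum_{F}\gamma(F)=\sum_e n_1(e)n_2(e)=W(G)$) is also sound and is essentially the classical Merris--Mohar argument, but it is not needed here.
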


In the vein of the aforementioned results on the Wiener indices of trees and block-clique graphs, it is a general problem of interest to consider other families of graphs with exploitable structure and derive specialized formulas and algorithms for efficient computation of their Wiener indices. Such methods make use of key properties of the considered graphs in order to speed up or avoid calculating the distance between every pair of vertices. For instance, Gray and Wang \cite{gray_wang} give formulas and bounds for the Wiener index of unicyclic graphs and related families; Chen et al. \cite{CDF2012} give an algorithm for computing a variation of the Wiener index in cactus graphs. In the remainder of this section, we give an algorithm for computing the Wiener index of linear $k$-trees.

A \emph{$k$-tree} is a graph that can be constructed recursively by starting with a copy of $K_{k+1}$ and connecting each new vertex to the vertices of an existing $k$-clique. A \emph{linear $k$-tree} is either a graph isomorphic to $K_{k+1}$, or a $k$-tree with exactly two vertices of degree $k$. A linear $k$-tree can be constructed recursively by starting with a copy of $K_{k+1}$ and connecting each new vertex to the vertices of an existing $k$-clique which has a vertex of degree $k$. A \emph{recursive labeling} of a linear $k$-tree is a labeling of its vertices so that the vertices with labels $1,\ldots,k+1$ form a $k+1$-clique, and every vertex with label $j>k+1$ is adjacent to exactly $k$ vertices with labels smaller than $j$; for $j>k+1$, let $\tilde{N}(j)$ be the set of vertices with labels smaller than $j$ that vertex $j$ is connected to. The recursive labeling of a linear $k$-tree reflects the order in which it can be constructed according to its recursive definition.

Linear $k$-trees and their subgraphs have been used to characterize forbidden minors for certain values of the Colin de Verdi\`{e}re-type analog of the maximum nullity of a graph $G$ (see \cite{H2007,HH2007,JLS2009}) and are also related to the treewidth, zero-forcing number, and proper path-width of $G$ (see \cite{Betal2013,TUK1994}). See also \cite{Markenzon} for additional characterizations and structural properties of linear $k$-trees and related classes of graphs. Note that the shortest path between two vertices in a linear $k$-tree is not necessarily unique.

It is known that $k$-trees (and hence linear $k$-trees) are the maximal graphs with treewidth $k$. In \cite{CK2009}, a near-linear time algorithm is given for computing the Wiener index of graphs with fixed treewidth. However, the runtime of this algorithm is exponential in the treewidth, making it infeasible for graphs with large but fixed treewidth.

\begin{proposition}
The Wiener index of a linear $k$-tree can be computed in $O(kn^2)$ time with Algorithm 1.
\begin{algorithm2e}
\textbf{if} $G\simeq K_n$ \textbf{then return} $W(G)=n(n-1)/2$\;
Initialize $\tilde{N}(v_{n+1})=V(G)$ and $D(G)$ as all-zero $n\times n$ matrix\;
\For{$i = n$ \emph{\KwTo} $k+2$}{
Find a vertex in $\tilde{N}(v_{i+1})$ of degree $k$, label it $v_i$, store its neighbors as $\tilde{N}(v_i)$, and delete $v_i$\;
}
Label remaining vertices $v_1,\ldots,v_{k+1}$; \quad[$v_1,\ldots,v_n$ is recursive labeling]

\For{$\ell = 1$ \emph{\KwTo} $n$}{
\For{$i = \ell + 1$  \emph{\KwTo} $k + 1$}{
  $D[v_i,v_\ell] = D[v_\ell,v_i] = 1$\;
}

\For{$i=\max\{\ell, k+1\}+1$ \emph{\KwTo} $n$}{
   $D[v_i,v_\ell] = D[v_\ell,v_i] = \min_{v_j \in \tilde{N}(i)} \{D[v_\ell,v_j]\} + 1$\;
}
}
\Return $W(G)=\frac{1}{2}\sum_{i=1}^n\sum_{j=1}^n D[i,j]$\;

\NoCaptionOfAlgo
\caption{Algorithm 1: Computing $W(G)$ for a linear $k$-tree $G$}
\end{algorithm2e}
\end{proposition}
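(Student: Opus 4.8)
The plan is to prove two statements: that Algorithm~1 outputs $W(G)$, and that it runs in $O(kn^2)$ time. Correctness I would split into three parts. (i) Lines~1--5 terminate having produced a \emph{recursive labeling} $v_1,\dots,v_n$ of $G$ together with correct sets $\tilde{N}(v_j)$ for every $j>k+1$. (ii) After lines~6--12, $D[v_\ell,v_i]=d_G(v_\ell,v_i)$ for all $\ell,i$. (iii) Hence line~12 returns $\frac12\sum_{i,j}d_G(v_i,v_j)=W(G)$, which is immediate from the definition of the Wiener index. So the real content lies in (i), (ii), and the running-time count.

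For (i): if $n=k+1$ then $G\simeq K_{k+1}=K_n$ and line~1 returns the correct value. Otherwise $n\ge k+2$ and I would use the standard structural facts about linear $k$-trees (see \cite{Markenzon}): a linear $k$-tree on $m\ge k+2$ vertices has exactly two vertices of degree $k$ and these are non-adjacent; a degree-$k$ vertex of a $k$-tree is simplicial; and deleting a degree-$k$ vertex from a linear $k$-tree leaves a linear $k$-tree. Denote by $G_i$ the graph present at the start of iteration $i$ of the loop in lines~3--4 (so $G_n=G$, and $G_i$ has exactly $i$ vertices, those that will ultimately be labeled $v_1,\dots,v_i$). In the first iteration the search over $\tilde{N}(v_{n+1})=V(G)$ finds a degree-$k$ vertex, named $v_n$; I would then show that \emph{exactly one} vertex of $N_G(v_n)$ has degree $k$ in $G-v_n$, namely the unique neighbor of $v_n$ of degree $k+1$ in $G$ (the other degree-$k$ vertex of $G$ is non-adjacent to $v_n$, hence lies outside $N_G(v_n)$ and keeps its degree). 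Iterating, every pass of lines~3--4 succeeds, $\tilde{N}(v_i)$ is stored as $N_{G_i}(v_i)$, a $k$-clique of vertices that will receive labels smaller than $i$, and $G-v_i=G_{i-1}$ is again a linear $k$-tree; so $v_1,\dots,v_n$ is a recursive labeling as required.

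For (ii), the crux is the following claim about \emph{any} $k$-tree with a recursive labeling: for all $a,b$ and all $i\ge\max\{a,b\}$, $d_{G_i}(v_a,v_b)=d_G(v_a,v_b)$, with $G_i=G[\{v_1,\dots,v_i\}]$ as in (i). Since $G_{\max\{a,b\}}\subseteq\cdots\subseteq G_n=G$, distances are nonincreasing along this chain, so it suffices to find a shortest $v_a$--$v_b$ path of $G$ inside $G_{\max\{a,b\}}$. Take any shortest path $P$; if it uses a vertex whose label exceeds $\max\{a,b\}$, let $v_m$ be the one of largest label. Then $v_m$ is internal on $P$ and its two $P$-neighbors $v_c,v_d$ have labels below $m$, hence lie in $\tilde{N}(v_m)$; as $\tilde{N}(v_m)$ is a clique and $c\neq d$, the edge $v_cv_d$ exists and replacing $v_cv_mv_d$ by $v_cv_d$ shortens $P$, a contradiction. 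From the claim, for $\ell<i$ with $i>k+1$, using that $v_i$ is simplicial in $G_i$ with $N_{G_i}(v_i)=\tilde{N}(v_i)$,
\[
d_G(v_\ell,v_i)=d_{G_i}(v_\ell,v_i)=1+\min_{v_j\in\tilde{N}(v_i)}d_{G_i}(v_\ell,v_j)=1+\min_{v_j\in\tilde{N}(v_i)}d_G(v_\ell,v_j),
\]
while $d_G(v_\ell,v_i)=1$ when $\ell<i\le k+1$ because $\{v_1,\dots,v_{k+1}\}$ is a clique. A routine induction on $\max\{\ell,i\}$ then gives (ii): each $D[v_\ell,v_i]$ with $\ell<i$ is written exactly once — in line~8 if $i\le k+1$, otherwise by the recurrence of line~10, which reads only entries $D[v_\ell,v_j]$ with $v_j\in\tilde{N}(v_i)$ and $j<i$, all of them already written (in an earlier outer iteration, in line~8 of the current iteration, or earlier in the current line-10 loop) and, by the induction hypothesis, equal to the corresponding distances.

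For the running time: line~1 and the initialization in line~2 cost $O(n^2)$; keeping an array of current degrees and updating it when a vertex is deleted lets each of the at most $n$ iterations of lines~3--4 run in $O(n)$ time (scan once for a degree-$k$ vertex, read one adjacency-matrix row to record $\tilde{N}(v_i)$, decrement degrees), so lines~3--5 cost $O(n^2)$; in lines~6--12 the outer loop runs $n$ times, the loop of lines~9--10 runs $O(n)$ times and each of its iterations evaluates a minimum over the $k$-element set $\tilde{N}(v_i)$, contributing $O(kn^2)$; line~12 costs $O(n^2)$. Altogether the running time is $O(kn^2)$. I expect the main obstacle to be the distance part (ii) — proving the shortcutting claim cleanly and checking that the algorithm evaluates entries in a dependency-respecting order — with a secondary difficulty in verifying that the restricted search in lines~3--4 always succeeds, which is exactly where the \emph{linear} (rather than general) $k$-tree structure is used.
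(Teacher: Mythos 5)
Your proof is correct and follows essentially the same approach as the paper: fix a recursive labeling, establish the recurrence $d(v_\ell,v_i)=1+\min_{v_j\in\tilde{N}(v_i)}d(v_\ell,v_j)$ by showing that a shortest path never passes through a vertex labeled higher than both endpoints, and then count the cost of filling each row of the distance matrix. You supply more rigor than the paper does at two points it glosses over --- the verification that the restricted degree-$k$ search in the labeling phase always succeeds, and the clean shortcutting argument via the clique $\tilde{N}(v_m)$ of the highest-labeled vertex on a putative shortest path --- but the route is the same.
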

\begin{proof}
Let $G$ be a linear $k$-tree.
If $G\simeq K_n$, it is known that $W(G)=(n-1)+\ldots+1=n(n-1)/2$. Thus, assume henceforth that $G\not\simeq K_n$ and fix a recursive labeling $v_1,\ldots,v_n$ on the vertices of $G$. Note that since $|\tilde{N}(v_i)|=k$, the labeling and the list of neighborhoods $\{\tilde{N}(v_i)\}_{i=k+2}^n$ can be created in $O(kn)$ time and maintained.


From the recursive construction of the graph, it follows that the shortest path between a vertex $v$ and a vertex $u$ with a smaller label than $v$ does not pass through a vertex $w$ with a larger label than $v$, since any neighbor with a smaller label than $v$ of a vertex with a larger label than $v$ is also a neighbor of $v$. Thus, for $2\leq i\leq k+1$, $d(v_1,v_i)=1$ and for $k+2\leq i\leq n$,

\begin{equation*}
d(v_1,v_i)=\min_{v_j\in \tilde{N}(v_i)}\{d(v_1,v_j)\}+1.
\end{equation*}
By maintaining $d(v_1,v_j)$ for all $k+1<j<i$, $d(v_1,v_i)$ can be computed in $O(k)$ time. Thus, the first row of the distance matrix of $G$ can be computed in $O(kn)$ time. Similarly, this procedure can be repeated to find each row of the upper triangle of the distance matrix, by iteratively computing the distance between the current vertex in the recursive labeling and the vertices with larger labels in the graph. Hence, the Wiener index of $G$ can be found in $O(kn^2)$ time.
\end{proof}

We now also provide tight bounds for the Wiener indices of linear $k$-trees and classify the extremal graphs in this family (with respect to Wiener index) for each $k$ and $n$. Note that the given bounds generalize the known closed formulas for the Wiener indices of paths and complete graphs.

\begin{corollary}
Let $G$ be a linear $k$-tree on $n$ vertices. Then,
\begin{equation*}
\frac{k^2+k}{2}+n^2-kn-n\leq W(G)\leq \frac{(j+1)(2j^2k^2+jk(3+k-6n)+6n^2-6n)}{12},
\end{equation*}
where $j=\lfloor\frac{n-1}{k}\rfloor$.
The lower bound holds with equality for any linear $k$-tree with at least one dominating vertex such as the one in Figure \ref{fig:BorisExtremal}, left, and the upper bound holds for the family of linear $k$-trees described in Figure \ref{fig:BorisExtremal}, right.
\end{corollary}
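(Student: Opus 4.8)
The plan is to prove the two bounds separately: the lower bound by a short counting argument, and the upper bound by induction on $n$ after deleting a simplicial vertex.

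For the lower bound, observe that for any connected graph $G$ on $n$ vertices, $W(G)=\sum_{\{u,v\}}d(u,v)\ge\binom{n}{2}+\big(\binom{n}{2}-|E(G)|\big)$, since each pair contributes at least $1$ and each non-adjacent pair at least $2$. Every $k$-tree on $n$ vertices has $|E(G)|=\binom{k+1}{2}+k(n-k-1)$, so $W(G)\ge n(n-1)-\binom{k+1}{2}-k(n-k-1)=\tfrac{k^2+k}{2}+n^2-kn-n$, the claimed lower bound. Equality forces every non-adjacent pair to be at distance exactly $2$, i.e.\ $diam(G)\le 2$, which holds whenever $G$ has a dominating vertex; and a linear $k$-tree with a dominating vertex exists -- for example the join of $K_{k-1}$ with a path on $n-k+1$ vertices (build $K_{k+1}$ and repeatedly attach the next path vertex to the $k$-clique formed by the $K_{k-1}$ together with the current path end, which has degree $k$), which is the graph of Figure~\ref{fig:BorisExtremal}, left. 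Hence the lower bound is attained.

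For the upper bound, let $L_{n,k}$ denote the $k$-th power of the path $P_n$, i.e.\ the graph on $\{1,\dots,n\}$ with $i\sim j$ exactly when $|i-j|\le k$. This is a linear $k$-tree, deleting its vertex $n$ yields $L_{n-1,k}$, and $d(i,j)=\lceil|i-j|/k\rceil$, so $W(L_{n,k})=\sum_{1\le i<j\le n}\lceil(j-i)/k\rceil$ and the transmission of the end $n$ equals $\sum_{d=1}^{n-1}\lceil d/k\rceil$. A direct computation -- grouping the summands of $W(L_{n,k})$ by the value of $\lceil(j-i)/k\rceil$, with $j=\lfloor(n-1)/k\rfloor$ -- shows that $W(L_{n,k})$ equals the right-hand side in the statement, and in particular $W(L_{n,k})-W(L_{n-1,k})=\sum_{d=1}^{n-1}\lceil d/k\rceil$. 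I then claim $W(G)\le W(L_{n,k})$ for every linear $k$-tree $G$ on $n$ vertices, by induction on $n$; the base case $n=k+1$ is $W(K_{k+1})=\binom{k+1}{2}=W(L_{k+1,k})$. For $n\ge k+2$, fix a recursive labeling $v_1,\dots,v_n$ of $G$; then $v_n$ is simplicial of degree $k$ and $G-v_n$ is a linear $k$-tree on $n-1$ vertices, so $W(G)=W(G-v_n)+T(v_n)\le W(L_{n-1,k})+T(v_n)$ by the induction hypothesis. It remains to prove $T(v_n)\le\sum_{d=1}^{n-1}\lceil d/k\rceil$. Since $G$ is a $k$-tree it is $k$-connected, so whenever $G$ has a vertex at distance more than $i$ from $v_n$ the set $S_i(v_n;G)$ is a vertex cut separating $v_n$ from that vertex, whence $|S_i(v_n;G)|\ge k$; therefore $|\{u:d(v_n,u)\le i\}|\ge\min(n,1+ki)$ for all $i\ge 0$. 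Using $T(v_n)=\sum_{i\ge 0}\big(n-|\{u:d(v_n,u)\le i\}|\big)$ together with the identity $\sum_{i\ge 0}\big(n-\min(n,1+ki)\big)=\sum_{d=1}^{n-1}\lceil d/k\rceil$ yields the bound, so $W(G)\le W(L_{n-1,k})+\sum_{d=1}^{n-1}\lceil d/k\rceil=W(L_{n,k})$. Equality holds for $L_{n,k}$, and more generally for the family of Figure~\ref{fig:BorisExtremal}, right.

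The conceptual steps above are short; the two fiddly parts I expect are (i) the elementary but lengthy algebra identifying $\sum_{1\le i<j\le n}\lceil(j-i)/k\rceil$ with the stated closed form in $n$, $k$ and $j=\lfloor(n-1)/k\rfloor$ (one must treat the final, possibly incomplete residue class mod $k$ with care), and (ii) the equality analysis for the upper bound, which I expect to be the main obstacle: unwinding the induction, equality forces every layer $S_i(v_n;G)$ to have size exactly $k$ save possibly the last and $G-v_n$ to again be extremal, and one has to verify that these constraints pin down the extremal graphs, the latitude in how the vertices of the final partial layer are attached being what makes the extremal set a family, rather than a single graph, in general.
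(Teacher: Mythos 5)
Your argument is correct, and for the upper bound it takes a genuinely different route from the paper. For the lower bound the underlying idea is the same (every non-adjacent pair contributes at least $2$), but you package it globally via the edge count $|E(G)|=\binom{k+1}{2}+k(n-k-1)$ of a $k$-tree, whereas the paper counts row by row in a recursive labeling, noting that the $i$th row of the lower triangle of $D$ has exactly $\min\{k,i-1\}$ entries equal to $1$ and the rest at least $2$; the two computations are equivalent, and yours is arguably tidier. For the upper bound the paper stays inside a single graph: it shows that the $i$th row of the lower triangle of $D$ has at least $\min\{k\ell,\,i-1\}$ entries at most $\ell$ (a layer argument within the induced subgraph on the first $i$ vertices of the recursive labeling) and concludes that each row is dominated by the corresponding row for the path power $G_2=L_{n,k}$; the inductive step there is stated rather tersely. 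You instead peel off the last simplicial vertex $v_n$, write $W(G)=W(G-v_n)+T(v_n)$, apply induction to the linear $k$-tree $G-v_n$, and bound $T(v_n)\le\sum_{d=1}^{n-1}\lceil d/k\rceil$ by the $k$-connectivity of $k$-trees (each distance layer $S_i(v_n;G)$ that separates $v_n$ from a farther vertex has size at least $k$). This buys a cleaner and more rigorously justified domination step than the paper's degree-based remark, at the modest cost of needing the standard facts that $k$-trees are $k$-connected and that deleting the last vertex of a recursive labeling of a linear $k$-tree yields a linear $k$-tree (facts the paper also uses implicitly in its algorithm). Two small caveats: the closed-form evaluation of $W(L_{n,k})=\sum_{1\le i<j\le n}\lceil (j-i)/k\rceil$, which you defer, is exactly the diagonal-grouping computation the paper carries out and is where the stated formula actually gets verified, so it should be written out; and your worry about a full equality classification is unnecessary, since the corollary only asserts that the bounds are attained by the exhibited families, not that these are the only extremal graphs.
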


\begin{proof}
Let $G_1$ be a linear $k$-tree with a dominating vertex and a recursive vertex labeling. By construction, the $i^\text{th}$ row of the lower triangle of the distance matrix $D_1$ of $G_1$ has exactly $\min\{k,i-1\}$ entries equal to 1, and the rest of its entries are equal to 2. Thus, the sum of the entries in the lower triangle of $D_1$, i.e. $W(G_1)$, is
\begin{eqnarray*}
&&\sum_{i=1}^n[\min\{k,i-1\}+2(i-1-\min\{k,i-1\})]\\
&=&2\sum_{i=1}^n(i-1)-\sum_{i=1}^n\min\{k,i-1\}\\
&=&n^2-n-\frac{k^2-k}{2}-(n-k-1)k.
\end{eqnarray*}
\noindent Now, let $G$ be any linear $k$-tree with a recursive labeling. The $i^\text{th}$ row of the lower triangle of the distance matrix $D$ of $G$ has exactly $\min\{k,i-1\}$ entries equal to 1, and the rest of its entries are greater than or equal to 2. Thus, $W(G)\geq W(G_1)=\frac{k^2+k}{2}+n^2-kn-n$.

Let $G_2$ be the linear $k$-tree with vertex $i$ adjacent to $i-1,\ldots,i-k$ in the recursive labeling. By construction, the $i^\text{th}$ row of the lower triangle of the distance matrix $D_2$ of $G_2$ has $k$ entries equal to $\ell$ for $1\leq \ell\leq \lfloor\frac{i-1}{k}\rfloor$ and the remaining $i-1\mod k$ entries equal to $\lfloor\frac{i-1}{k}\rfloor+1$. To evaluate the sum of all the entries in the lower triangle of $D_2$, we can group terms diagonally, noting that there are $n-(\ell-1)k-1+\ldots+n-(\ell-1) k-k$ terms equal to $\ell$ for $1\leq \ell\leq j=\lfloor\frac{n-1}{k}\rfloor$ and $\frac{(n-1-jk)(n-1-jk+1)}{2}$ terms equal to $j+1$. Thus, we have
\begin{eqnarray*}
&&\sum_{\ell=1}^j \ell\left(\sum_{i=1}^k  (n-(\ell-1)k-i)\right)+(j+1)\frac{(n-1-jk)(n-jk)}{2} \\
&=&\sum_{\ell=1}^j \ell\left(nk-k^2(\ell-1)-\frac{k(k+1)}{2}\right)+(j+1)\frac{(n-1-jk)(n-jk)}{2}\\
&=&nk\frac{j(j+1)}{2}-k^2\frac{j(j+1)(2j+1)}{6}+k^2\frac{j(j+1)}{2}-\frac{k(k+1)}{2}\frac{j(j+1)}{2}\\
&&+(j+1)\frac{(n-1-jk)(n-jk)}{2}.
\end{eqnarray*}
\noindent Now, let $G$ be any linear $k$-tree with a recursive labeling. The $i^\text{th}$ row of the lower triangle of the distance matrix $D$ of $G$ has exactly $\min\{k,i-1\}$ entries equal to 1 and at least $\min\{2k,i-1\}$ entries less than or equal to 2, since each vertex in $\tilde{N}(i)$ has degree at least $k+1$. Similarly, it has at least $\min\{k \ell,i-1\}$ entries which are less than or equal to $\ell$ for $1\leq \ell\leq \lfloor\frac{i-1}{k}\rfloor$. By induction, it follows that the entries in the $i^\text{th}$ row of the lower triangle of $D$ are maximized when there are $k$ entries equal to $\ell$ for $1\leq \ell\leq \lfloor\frac{i-1}{k}\rfloor$ and the remaining $i-1\mod k$ entries equal to $\lfloor\frac{i-1}{k}\rfloor+1$, which is the combination realized by $D_2$. Thus, $W(G)\leq W(G_2)=\frac{(j+1)(2j^2k^2+jk(3+k-6n)+6n^2-6n)}{12}$.
\end{proof}

\begin{figure}[h!]\label{fig:BorisExtremal}
\begin{center}
\includegraphics[scale=0.35]{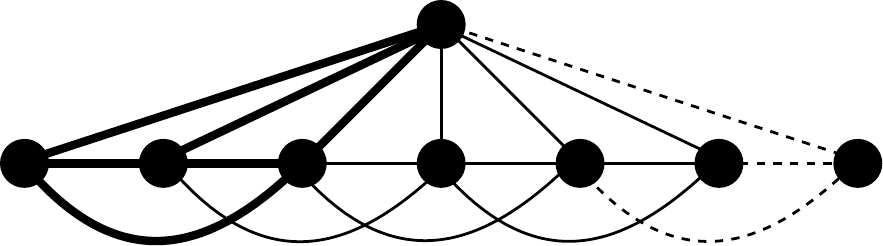}\qquad\qquad\includegraphics[scale=0.35]{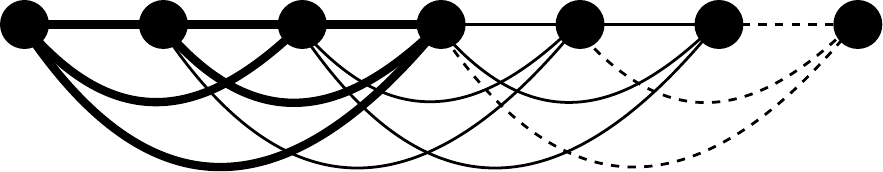}\qquad\qquad
\caption{Linear $k$-trees with extremal Wiener indices, obtained by starting from a copy of $K_{k+1}$ shown in bold, and appending vertices as shown by the dotted lines.}
\end{center}
\end{figure}

\subsection*{Acknowledgments}

We thank Kristin Heysse for her help with some SAGE simulations, and gratefully acknowledge
financial support for this research from the following grants and organizations: NSF-DMS Grants 1604458, 1604773, 1604697 and 1603823 (B. Brimkov, A. Erey, L. Leshock, X. Mart\'{i}nez-Rivera, S.-Y. Song, J. Williford), The Combinatorics Foundation (A. Abiad, S. O), Institute of Mathematics and its Applications (X. Mart\'{i}nez-Rivera), NSF 1450681 (B. Brimkov), NSF 1400281 (J. Williford), KRF-MSIP grant 2016R1A5A1008055 (S. O). We would also like to thank the anonymous referees for their valuable comments and suggestions, which helped to improve this paper.

%
%
%
%

\end{document}